\newcommand{\bl}[1]{{\color{blue}#1}}
\newcommand{\ora}[1]{{\color{orange}#1}}
\newcommand{\eps}{\varepsilon}
\theoremstyle{plain}
\newtheorem{thr}{Theorem}[section]
\newtheorem{lem}[thr]{Lemma}
\newtheorem{prop}[thr]{Proposition}
\newtheorem{cor}[thr]{Corollary}
\theoremstyle{remk}
\newtheorem{remark1}[thr]{Remark}
\newenvironment{remk}{\begin{remark1}\rm}{\hfill $\triangle$ \end{remark1}}
\theoremstyle{ex}
\newtheorem{ex1}[thr]{Example}
\newenvironment{ex}{\begin{ex1}\rm}{\hfill $\triangle$ \end{ex1}}
\newenvironment{defi}{\begin{defi1}\rm}{\hfill $\triangle$ \end{defi1}}
\theoremstyle{defi}
\newtheorem{defi1}[thr]{Definition}
\definecolor{wred}{rgb}{0.7,0.18,0.12}
\definecolor{wgreen}{rgb}{0.1,0.53,0.37}
\numberwithin{equation}{section}
\def\barray{\begin{eqnarray*}}             \def\earray{\end{eqnarray*}}
\def\beq{\begin{equation}} \def\eeq{\end{equation}}
\title{Multiple timescales and the parametrisation method\\ in geometric singular perturbation theory}
\author{Ian Lizarraga\thanks{School of Mathematics and Statistics, The University of Sydney, {\tt ian.lizarraga@sydney.edu.au}}, Bob Rink\thanks{Department of Mathematics, Vrije Universiteit Amsterdam, The Netherlands, {\tt b.w.rink@vu.nl}.} \ and Martin Wechselberger\thanks{School of Mathematics and Statistics, The University of Sydney, {\tt  martin.wechselberger@sydney.edu.au}.}}
\date{\today}
\newcommand\WM[1]{{\color{violet}#1}}
\begin{document}

\maketitle

\abstract{We present a novel method for computing slow manifolds and their fast fibre bundles in geometric singular perturbation problems. This coordinate-independent method is inspired by the parametrisation method introduced by Cabr\'e, Fontich and de la Llave.  By iteratively solving a so-called conjugacy equation, our method simultaneously computes parametrisations of slow manifolds and fast fibre bundles, as well as the dynamics on these objects, to arbitrarily high degrees of accuracy. We show the power of this {\em top-down} method for the study of systems with multiple (i.e., three or more) timescales. In particular, we highlight the emergence of {\em hidden} timescales and show how our method can uncover these surprising multiple timescale structures. We also apply our parametrisation method to several reaction network problems.}


\section{Introduction}

Many evolving biological and physical systems display distinct temporal features, which can be attributed to processes taking place on different timescales. In mathematical terms, such multiple timescale models are in fact {\em singular perturbation problems}. 
In this paper we consider parameter-dependent differential equations of the general form 
\begin{equation}\label{eq:singpert}
x' = \frac{dx}{dt} = F(x,\eps)  =\sum_{i=0}^{l} \eps^i F_i(x)\,\qquad (l\ge 1)\, ,
\end{equation}
for $x\in \mathbb{R}^n$. Here, $0 \leq \varepsilon\ll 1$ is a small perturbation parameter, and the smooth functions $F_i:\mathbb{R}^n\to \mathbb{R}^n$ model processes evolving on distinct timescales $t_i=\eps^i t$. 

In the case that the zero-limit critical set
$$
S:=\{x\in \mathbb{R}^n\,:\,F_0(x)=0\}
$$
contains a $k$-dimensional differentiable {\em critical manifold} $S_0\subseteq S$, with $1\le k\leq n-1$, system \eqref{eq:singpert} is considered a singular perturbation problem. The slower processes  $F_{i} \, (i\geq 1)$ will then determine the dynamics of the model in phase space near $S_0$.

A classical theorem of Fenichel \cite{fenichel1979} guarantees that the (compact) critical manifold  $S_0$ persists as a flow-invariant {\it slow manifold} $S_0^\eps$ in any dynamical system close to the singular limit $x'= F_0(x)$, under the condition that the critical manifold $S_0$ is {\em normally hyperbolic}. The corresponding slow flow on $S_0^\eps$ is then governed to leading order by the {\em reduced problem}
\begin{equation}\label{eq:reduced-singpert}
\dot{x}= P_0(x) F_1(x)\,,\qquad \forall x\in S_0\,,
\end{equation}
where the overdot indicates differentation with respect to the slow time $t_1=\eps t$, and $P_0$ is a certain (unique) projection  onto the {\em tangent bundle} ${\cal T}S_0$. This {\em geometric singular perturbation theory} (GSPT) result due to Fenichel not only describes the persistence of an invariant slow manifold $S_0^\eps$ near $S_0$, but also the (local) persistence of an invariant  nonlinear {\em fast fibre bundle} ${\cal L}S_0^\eps$ that describes the fast dynamics towards or away from the slow manifold $S_0^\eps$.

Under the basic assumption that the critical manifold $S_0$ is the image of a smooth embedding $\phi_0: U_1\to \mathbb{R}^n$ defined on an open subset $U_1\subset \mathbb{R}^k$, the $k$-dimensional reduced leading order problem \eqref{eq:reduced-singpert} can alternatively be described in the (local) coordinate chart $U_1$ by the differential equation
\begin{equation}\label{eq:r}
\dot{\xi}= r_1(\xi) := L_0(\xi)\, P_0 F_1(\phi_0(\xi))\ \ \mbox{for}\ \xi\in U_1\, .
\end{equation}
Here, $L_0(\xi)D\phi_0(\xi)={\rm id}_{\mathbb{R}^k},\,\forall \xi\in U_1$, i.e., $L_0(\xi)$ is a {\em left inverse} of $D\phi_0(\xi)$; see, e.g., Feliu et al \cite{feliu2020}. 
In this paper, we build upon this local representation in a coordinate chart, and we provide an alternative way to compute it, by introducing a {\em parametrisation method} for finding slow invariant manifolds and their linear fast fibre bundles, to any order in the small parameter, and for general coordinate-independent multiple timescale problems of the form \eqref{eq:singpert}. A key observation underlying our method is that the identification of these manifolds and bundles can be reframed in terms of solving a so-called {\it conjugacy equation}. A solution to this conjugacy equation consists of embeddings of the slow manifold and corresponding linear fast fibre bundle, as well as the slow and fast vector fields that they carry (in the appropriate coordinate chart).  Our parametrisation method seeks to iteratively solve the conjugacy equation, thereby calculating the slow and fast dynamics to arbitrary precision.


%

To the best of our knowledge this paper presents the first parametrisation method for singular perturbation problems. At the same time, the idea to compute invariant manifolds by solving a conjugacy equation is of course classical. A well-known example is the Lyapunov-Schmidt method for computing periodic orbits near Hopf bifurcations. In this setting, the problem is phrased as a conjugacy equation on a loop space, which can be solved with the implicit function theorem, see for instance \cite{duisbif, Golstewperspective, vdbauw}. The  idea is also 
very  prominent in the work of Cabr\'e, Fontich and de la Llave, who introduced a parametrisation method in the early 2000's. More precisely, in  \cite{Cabre1, Cabre2, Cabre3} they provide an iterative scheme for computing stable and unstable manifolds of hyperbolic fixed points of maps and flows, and analytically prove the convergence of this scheme. The scope of their method has since been extended considerably, for example to delay equations \cite{delay} and partial differential equations  \cite{PDEparametrisation}, and variants of the method  have been introduced to compute, e.g. KAM tori \cite{ActionAngle} and centre manifolds \cite{wouter}. We refer to \cite{parameterisation_book} for an extensive list of references and an overview of results.

We should mention in particular the work of Haro and Canadell \cite{HaroCanadell}, who introduced a parametrisation method for computing normally hyperbolic invariant manifolds. Their method consists of a Newton iteration, and it was used to numerically compute normally hyperbolic quasi-periodic tori in Hamiltonian systems. For such quasi-periodic tori, their method was proved to converge in \cite{HaroCanadell2}.  
The setting of the current paper is similar in nature: we present a parametrisation method for slow manifolds, which are also normally hyperbolic invariant manifolds, but having the additional property that the flow that they carry is close to stationary. This latter property allows us to provide explicit analytical formulas for the solutions to the homological equations that arise in each step of the iteration, see for example the proofs of Theorem \ref{infconjthm} and Theorem \ref{solveinflinear}. Such analytical formulas are not available for the iterative method in \cite{HaroCanadell, HaroCanadell2}. On the other hand, in contrast with  \cite{HaroCanadell, HaroCanadell2}, the iterative scheme that we provide here is a quasi-Newton method, and hence not quadratically but linearly convergent.

\begin{figure*}[t]
  \centering 
      \includegraphics[trim=10cm 4cm 10cm 2cm,clip,width=9.5cm,height=11.5cm]{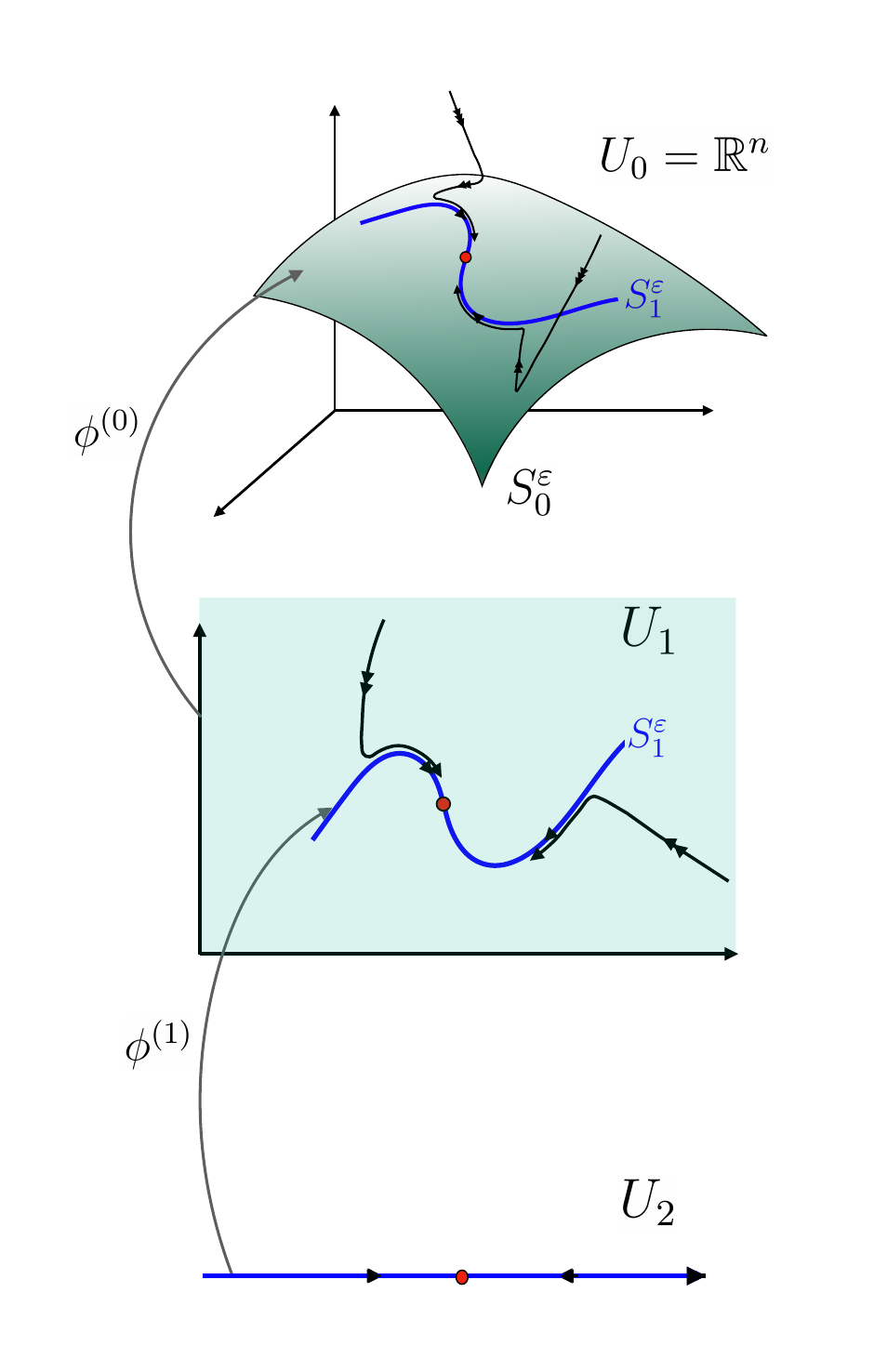}       

      \caption{Sketch of a multiple timescale dynamical system having two nested slow invariant manifolds $S_0^{\eps}$ and $S_1^{\eps}$. The coordinate chart $U_1$ is embedded as $S_0^{\eps}\subset U_0 = \mathbb{R}^n$ by $\phi^{(0)}$, and the coordinate chart $U_2$  is embedded as $S_1^{\eps}\subset U_1$ by $\phi^{(1)}$ (this superscript notation $^{(i)}$ will be introduced in section~\ref{secmultiple}). The red point depicts the possibility of a lower-dimensional invariant object (e.g.~an equilibrium point, or a further submanifold introducing yet another dynamical timescale). The parametrisation method introduced in this article uses a {\em top-down} approach, i.e.~the slow manifold $S_0^\eps$ is identified first, then the infra-slow manifold $S_1^\eps$, etc.}
            \label{fig:embedding}
\end{figure*}

The parametrisation method becomes particularly useful when dealing with genuine singularly perturbed multiple timescale problems in which the slow flow on the slow manifold $S_0^\eps$ is a singular perturbation problem itself, i.e., if there exists an embedded lower-dimensional infra-slow invariant manifold $S_1^\eps\subset S_0^\eps$ where processes on the timescale $t_2=\eps^2 t$ become dominant.\footnote{Here we are identifying $S_1^{\eps} \subset U_1$ with its image $\phi_0(S_1^{\eps})\subset \mathbb{R}^n$ under the smooth embedding $\phi_0$. Such natural identifications will be implicitly applied throughout the paper whenever we discuss nesting of the slow manifolds as subsets in $\mathbb{R}^n$.}  Note that the equation $\dot{\xi} = r_1(\xi) + \mathcal{O}(\eps)$ on the coordinate chart $U_1$ is itself a singular perturbation problem when $S_1 = \{\xi\in U_1 \, : \, r_1(\xi)=0\}$ contains a critical manifold of dimension $1\leq n_2 \leq k-1$. This simple observation will lead to a geometric definition of a singularly perturbed multiple timescale system, possessing nested invariant (slow, infra-slow,\ldots) manifolds $S_0^\eps\supset S_1^\eps\supset\cdots\supset S_{m-2}^\eps$ that support flows evolving on distinct (slow, infra-slow,\ldots) timescales $t_1=\eps t,\,t_2=\eps^2 t\,,\ldots,t_{m-1}=\eps^{m-1}t$.
%
We provide algorithms to calculate these nested slow manifolds, and their slow flows, 
using a {\em top-down} approach: our method calculates the `top' slow manifold $S_0^\eps $ first, and then makes its way `down' the chain of nested slow manifolds; see Figure~\ref{fig:embedding}.
%
%

First theoretical results on multiple timescale problems have been provided by
Cardin \& Teixeira \cite{cardin2017} who 
studied multiple timescale problems given in the {\em standard form}
\begin{equation}\label{eq:cardin1}
\begin{aligned}
x_j' & = \left(\prod_{i=0}^{m-j} \eps_i\right)  f_j(x,\eps)\,,\quad j=1,\ldots,m \, ,
\end{aligned}
\end{equation}
where $x=(x_1,\ldots,x_m)\in\mathbb{R}^n$, $m\ge 2$, $x_j\in\mathbb{R}^{l_j}$, $\sum_{j=1}^m l_j=n$, $\eps_0=1$ and $\eps=(\eps_1,\ldots,\eps_m)$ is a vector of {\em independent} small parameters $\eps_1,\ldots,\eps_m\ll 1$. In this standard framework, the underlying multiple timescale processes are identified with specific coordinates: $x_m$ is a fast variable, $x_{m-1}$ is a slow variable, \ldots, $x_1$ is the slowest variable. Here, one may also identify the nested invariant (critical) manifolds $S_0\supset S_1\supset \cdots\supset S_{m-2}$ {\em a-priori} as the zero sets
\begin{equation}\label{def:nested-standard}
S_{l}=\{x\in\mathbb{R}^n: f_j(x,0)=0,\; j=1,\ldots l+1\}\,,\quad l=0,\ldots m-2\,.
\end{equation}
Assuming that these nested  critical manifolds $S_l$ are all normally hyperbolic\footnote{viewed in the corresponding system evolving on the timescale $t_{l+1}=\eps^{l+1}t$.}, the authors of \cite{cardin2017}   show 
the persistence of nested invariant (slow, infra-slow,\ldots) manifolds $S_0^\eps\supset S_1^\eps\supset \cdots\supset S_{m-2}^\eps$. In contrast to our proposed top-down approach, their proof follows a  {\em bottom-up} approach, i.e., they first show the existence of the `bottom' manifold $S_{m-2}^\eps$ based on Fenichel theory, and then work their way `up' to the top manifold $S_{0}^\eps$. We also note that the independence of the small parameters $\eps=(\eps_1,\ldots,\eps_m)$ is crucial in their proof.
Recently, Kruff \& Walcher \cite{kruff2019} provided first results on three timescale problems in a nonstandard, coordinate independent setting. Their approach is based on transforming a coordinate-independent system into a (local) standard form first, and then applying the results of Cardin \& Teixeira.


We would like to point out that the nested sets given by \eqref{def:nested-standard} are restrictive when it comes to identifying multiple timescale structures in singular perturbation problems,
i.e.~the {\em bottom-up} approach relies on {\it a priori} knowledge of ultimate attractor states and/or the number of timescales involved in the dynamics.
%
%
In the following example, we highlight some of the subtleties that may arise in general multiple timescale dynamics.

%

\begin{ex}
Let us consider the following (reaction) network motif
$$
\xymatrix{
& X_2 \ar@{-|}[dr] &\\
X_3 \ar[rr] \ar[ur]& &X_1
}
$$
describing an {\em incoherent feed-forward loop} (IFFL) with three reactants; see e.g.~\cite{alon2007}.
In dimensionless form, the evolution equations for the reactant concentrations that govern a very simple form of such an IFFL,  are
\begin{align}\label{babyreaction}
\frac{d}{dt}\left( \begin{array}{c} 
 x_1 \\
x_2 \\ 
x_3
\end{array} \right) = 
\underbrace{ \left( \begin{array}{c} 0 \\
0 \\ 
- x_3
\end{array} \right)}_{F_0(x)} + 
\varepsilon \underbrace{ \left( \begin{array}{c}
a_1 x_3 - a_2x_1 x_2  \\
  a_3 x_3 - a_4 x_2\\
1 
\end{array} \right)}_{F_1(x)} \, 
 \,.
\end{align}
Here, $0 < \varepsilon \ll 1$ is a small parameter, i.e., we assume that the decay rate of $x_3$ is significantly faster than all other reaction rates, but we assume that all other parameters $a_i=O(1)$, $i=1,\ldots,4$, are of the same order. 

Firstly, we notice that this reaction network is governed by two processes,
one fast process $F_0$ and one slow process $F_1$. System \eqref{babyreaction} is actually of the standard form \eqref{eq:cardin1}, i.e., $x_3$ is considered a fast variable while $(x_1,x_2)$ are slow variables. One may therefore naively expect  to deal with a standard two timescale problem. It turns out that this intuition is incorrect. 

To see this, note that the critical manifold $S_0=\{(x_1,x_2,x_3)\in\mathbb{R}^3: x_3=0\}$ is a two-dimensional plane.
 In fact, because $\frac{dx_3}{dt} = -x_3 + \varepsilon$, every solution converges exponentially to the nearby invariant slow manifold $S_0^\eps=\{(x_1,x_2,x_3)\in\mathbb{R}^3: x_3=\eps\}$.
The slow dynamics on $S_0^\eps$ is governed by the two-dimensional system
\begin{align}\label{baby-slow}
\frac{d}{dt_1}\left( \begin{array}{c} \ x_1 \\
 x_2
\end{array} \right) = \underbrace{
 \left( \begin{array}{c} - a_2 x_1 x_2 \\
- a_4 x_2
\end{array} \right) }_{r_1(x_1, x_2)}+ 
\varepsilon \underbrace{\left( \begin{array}{c}
a_1    \\
  a_3 
\end{array} \right)}_{r_2(x_1, x_2)}\, ,
\end{align}
where $t_1=\eps t$ is a slow time variable. Observe now that system \eqref{baby-slow} is another two timescale problem, but this time in the nonstandard form \eqref{eq:singpert}. The one-dimensional critical manifold of system \eqref{baby-slow} is $S_1=\{(x_1,x_2)\in \mathbb{R}^2 : x_2=0\}$, a line. 
%
%
Because $$\frac{dx_2}{dt_1} = - a_4 x_2 + \varepsilon a_3\, ,$$ we also  see that every solution to \eqref{baby-slow} converges exponentially (in the slow time $t_1$) to the infra-slow one-dimensional invariant manifold  $S_1^\eps=\{(x_1,x_2)\in \mathbb{R}^2: x_2=\eps \frac{a_3}{a_4}\}$.
The infra-slow dynamics on $S_1^\eps$ is in turn governed by
\begin{align} \label{eq:babyinfra-slow}
\frac{dx_1}{dt_2} &= a_1 - \frac{a_2 a_3}{a_4} x_1\, ,
\end{align}
where $t_2=\eps^2 t$ is an infra-slow time variable. The infra-slow flow on $S_1^\eps$ in fact converges to the equilibrium $x_1=\frac{a_1a_4}{a_2a_3}$ corresponding to the unique equilibrium state $(x_1, x_2, x_3) = (\frac{a_1a_4}{a_2a_3},\eps \frac{a_3}{a_4},\eps)$ of system \eqref{babyreaction}.
Remarkably, and perhaps counter-intuitively, we thus find that system \eqref{babyreaction} actually evolves on {\it three} different timescales.

We would like to point out that our direct computations followed a top-down approach, i.e.~we algorithmically discovered the hidden multiple timescale structure through computing the slow vector field \eqref{baby-slow} in a coordinate chart for the slow manifold $S_0^\varepsilon$, and then identifying \eqref{baby-slow} as another singular perturbation problem. 
%
We note that  one can also identify the three disparate timescales (locally) in this simple example by computing the Jacobian at the unique attracting equilibrium, which is given in upper triangular form
$$
\begin{pmatrix}
-\varepsilon^2 \frac{a_2a_3}{a_4} & -\varepsilon \frac{a_1a_4}{a_3} & \varepsilon a_1\\
0 & -\varepsilon a_4 & \varepsilon a_3 \\
0 & 0 & -1
\end{pmatrix}
\, .
$$
We emphasize that {\em a priori} knowledge (or even existence) of a unique attractor state in general multiple timescale systems is usually exceptional. Hence, any method to identify multiple timescales dynamics should not necessarily rely on this.

\end{ex}
\noindent 
In general, the slow and infra-slow manifolds of a singular perturbation problem cannot be computed as easily as in the above simple example. As a result, it will be hard to predict which, and how many, timescales   a singular perturbation problem will possess.
The parametrisation method presented in this paper will take care of this problem and provide the algorithms needed to define and compute slow and infra-slow manifolds and their flows, in arbitrary multiple timescale problems of the form \eqref{eq:singpert}.

There are already several (families of) algorithms which compute approximations to invariant objects in two-timescale dynamical systems. These include the computational singular perturbation (CSP) method \cite{lam1994, lam1989}, the zero-derivative principle \cite{gear2005}, and the method of intrinsic low-dimensional manifolds \cite{maas1992a,maas1992}. 
Our parametrisation method shares some similarities with the CSP method, which also iteratively computes slow manifolds and fast fibres. A distinction is often drawn between the {\it one-step} and {\it two-step} versions of CSP which approximate, respectively, the slow manifold alone versus the slow manifold and the linear fast fibre bundle simultaneously. Analogously, our parametrisation method is also capable of approximating the slow manifold independently. The CSP method has been implemented for the class of coordinate independent problems of the form \eqref{eq:singpert} in \cite{lizarraga2020}, building on rigorous convergence and coordinate-independence results in \cite{kkz2015,mease1995,valorani2005, kkz2004a,kkz2004b}. We emphasize that our parametrisation method wields the further advantage of being naturally applicable in multiple timescale problems. 


The remainder of this paper is organised as follows.  In section \ref{secmethod}, we introduce the parametrisation method in the   setting of two-timescale problems. We show how the method can be used to compute slow manifolds and their linear fast fibre bundles, and we prove that the method formally converges. In section \ref{secmultiple}, we give a definition of a geometric singular perturbation problem with multiple timescales, and we discuss in some detail the mechanism by which `hidden' timescales can emerge. In section \ref{secappl}, we apply the parametrisation method to two different reaction networks. We conclude with some remarks in section \ref{secremarks}.


 \section{The parametrisation method in two-timescale problems}\label{secmethod}
 
 In this section, we introduce the parametrisation method for finding slow manifolds and their fast fibre bundles in geometric singular perturbation problems of the general form \eqref{eq:singpert}.
 
 \subsection{Setup}
 Let $\varepsilon_0 > 0$, let $U_0\subseteq \mathbb{R}^n$ be an open subset, and consider a smooth parameter-dependent vector field $F: U_0 \times[0,\varepsilon_0) \to \mathbb{R}^n$ of the form\footnote{This could be viewed as either a power series expansion of a given function $F(x,\eps)$ or a finite sum of given functions $F_i(x)$ ($i=1,\ldots,l$).}
$$ 
F(x,\varepsilon)  =\sum_{i=0}^{l} \eps^i F_i(x)\,\qquad (l\ge 1)\,.
$$
Our crucial assumption will be that $F_0: U_0\subseteq\mathbb{R}^n\to \mathbb{R}^n$ admits a smooth $k$-dimensional manifold $S_0$ of critical points, where we only consider the case that $S_0$ is nontrivial, i.e., that $1\leq k \leq n-1$. More specifically, we will assume that $S_0$ is the image of a smooth embedding 
$$\phi_0: U_1\to U_0 \subseteq \mathbb{R}^n\,, $$
where $U_1\subset \mathbb{R}^k$ is an open set. 

Our first goal is to find an invariant manifold $S_0^\eps$ close to $S_0$ for the perturbation $F$ of $F_0$. We do this by searching for an embedding $\phi$ of $S_0^\eps$ with an asymptotic expansion of the form
$$\phi  = \phi(\xi,\varepsilon) = \phi_0(\xi) + \varepsilon  \phi_1 (\xi) + \varepsilon^2  \phi_2 (\xi) + \ldots : U_1 \times [0,\varepsilon_0) \to \mathbb{R}^n\, .$$
It holds that $\phi(U_1,\eps)$ is invariant under the flow of $F(\cdot, \eps)$, for all $\eps \in [0,\eps_0)$, if there is a vector field 
$$
r = r(\xi,\varepsilon) = \varepsilon r_1(\xi) + \varepsilon^2 r_2(\xi) + \ldots : U_1 \times [0,\varepsilon_0) \to \mathbb{R}^k
$$
with the property that whenever $\xi(t)$ satisfies $\frac{d \xi}{dt} = r(\xi,\varepsilon)$, then $x(t):=\phi(\xi(t),\varepsilon)$ satisfies $\frac{d x}{dt} = F(x,\varepsilon)$. 
In other words, if the pair $(\phi, r): U_1 \times [0,\eps_0) \to \mathbb{R}^n \times \mathbb{R}^k$ satisfies the {\it conjugacy equation}
\begin{align}\label{conj}
    \mathcal{F}(\phi, r)(\xi, \eps):= D \phi(\xi,\varepsilon) \cdot r(\xi,\varepsilon) - F(\phi(\xi,\varepsilon),\varepsilon) = 0\, .
\end{align}
Here $D=D_x$ denotes differentiation to the first variable only. In particular, it follows from (\ref{conj}) that $ F(\phi(\xi,\varepsilon),\varepsilon) = D\phi(\xi,\varepsilon)\cdot r(\xi,\varepsilon) \in T_{\phi(\xi,\varepsilon)} \left( \phi(U_1,\eps) \right)$. This means that $F(\phi(\xi),\varepsilon)$ is tangent to $S_0^{\varepsilon}:= \phi(U_1,\eps)$, and hence that  $S_0^{\varepsilon}$ is invariant under the flow of $F(\cdot, \eps)$.
\begin{remk} \label{remk:commute2}
One can equivalently represent  equation (\ref{conj}) by the commuting diagram
$$
\begin{tikzcd}
    U_1 \arrow[r,"\phi "'] \arrow[d,"{\rm id}_{U_1 } \times r "']  &  U_0  \arrow[d,"{\rm id}_{U_0 }\times F "] \\
   U_1 \times \mathbb{R}^k   \arrow[r,"T \phi  "']  & U_0 \times \mathbb{R}^n  
 \end{tikzcd}
$$
where $T\phi(\xi, v,\varepsilon) := (\phi(\xi,\varepsilon), D\phi(\xi,\varepsilon)\cdot v)$. Note that we have suppressed the dependence on the parameter  $\eps\in [0,\eps_0)$ in the diagram for clarity. 
\end{remk}
\noindent 
We will now attempt to solve the conjugacy equation \eqref{conj} by an iterative procedure. In fact, expansion of (\ref{conj}) in powers of $\varepsilon$ yields a recurrence relation for the unknowns $(\phi_i, r_{i})$. The first of these is the equation $F_0\circ \phi_0 = 0$, which holds by definition of $S_0$. The remaining relations are the following (here we don't write the dependence on $\xi$):
%
\begin{align} \label{eq:G-recursive}
\begin{array}{lll}
 D \phi_0  \cdot r_1 - DF_0(\phi_0) \cdot \phi_1   = &  F_1(\phi_0) & =: G_1  \\ 
 D\phi_0 \cdot r_2  - DF_0(\phi_0) \cdot \phi_2   = &  - D\phi_1 \cdot r_1 +  F_2(\phi_0)  &  \\ 
  & + DF_1(\phi_0)\cdot \phi_1  +  \frac{1}{2} D^2F_0(\phi_0)(\phi_1, \phi_1)&  =: G_2 \\ 
  \vdots &   \vdots & \vdots \\ 
 D \phi_0  \cdot r_i - DF_0(\phi_0) \cdot \phi_i   = &  G_i(\phi_0, \ldots, \phi_{i-1}, r_1, \ldots, r_{i-1})  & =: G_i  \\ 
 \vdots & \vdots &  \vdots
\end{array}
\end{align}
%
The $i$-th equation in this list is an inhomogeneous linear equation for $r_i$ and $\phi_i$, of the form
\begin{equation}\label{eq:infconj}
    D\mathcal{F}(\phi_0, 0)\cdot (\phi_i, r_i) = D \phi_0  \cdot r_i - DF_0(\phi_0) \cdot \phi_i= G_i\, . 
\end{equation}
We shall refer to this equation as the {\it infinitesimal conjugacy equation}. 
The inhomogeneous term $G_i=G_i(\xi)$ in this equation only depends on the  $\phi_j$ and $r_j$  with $0\leq j<i$. The recursively defined infinitesimal conjugacy equations  can therefore  be solved iteratively for $(\phi_i, r_i)$ under the condition that the linear operator $D\mathcal{F}(\phi_0, 0)$
is surjective. 

\begin{remk} \label{remk:Nfsplitting}

{In many applications, multiple timescale models are given by a polynomial
vector field $F(x,\varepsilon)$ with the property that the leading order term can be factored as
\begin{align}
F(x,0) = F_0(x)&= {\bf N}_0(x)\cdot f_0(x)\,, \label{eq:splitting}
\end{align}
\noindent 
with matrix ${\bf N}_0: U_0 \to L(\mathbb{R}^{n-k}, \mathbb{R}^{n})$ and vector $f_0: U_0 \to\mathbb{R}^{n-k}$.} 
 If we assume that this matrix ${\bf N}_0(x)$ has full rank $n-k$ for all $x\in S_0$, then $F_0(x)=0$ if and only if $f_0(x)=0$. The critical manifold is then given by $S_0=\{x\in U_0\, | \, f_0(x)=0\}$. This $S_0$ is indeed a manifold of dimension $k$, if it holds that $Df_0(x):\mathbb{R}^n\to\mathbb{R}^{n-k}$ is surjective for all $x\in S_0$. The operator $D\mathcal{F}(\phi_0, 0)$ is given in this case by
 \begin{align}\label{eq:operatorfactor}
D\mathcal{F}(\phi_0, 0)\cdot (\phi_i, r_i) &= \begin{pmatrix} -{\bf N}_0(\phi_0) \cdot Df_0(\phi_0)  & D\phi_0 \end{pmatrix} \begin{pmatrix}  \phi_i \\ r_i \end{pmatrix}.
\end{align}
 \end{remk}

\begin{ex}
Consider the planar system
\begin{align} \label{eq:parab}
\left. \begin{array}{ll}
\begin{pmatrix} x_1' \\ x_2' \end{pmatrix} &= \begin{pmatrix} 2x_1 \\ x_2 \end{pmatrix} (1-x_2-x_1^2) + \eps \begin{pmatrix} 2 \\ -x_1 \end{pmatrix} \vspace{0.3 cm}\\
&= F_0(x_1,x_2) + \eps F_1(x_1,x_2)
\end{array}  \right. 
\end{align}
with $0 \leq \eps \ll 1$. Observe that \eqref{eq:parab} is written with the leading-order part $F_0$ in the factored form \eqref{eq:splitting}. The critical manifold
 $$S_0 = \{(x_1,x_2):F_0(x_1,x_2) = 0\} =  \{(x_1, x_2): \,x_2 = 1- x_1^2\}\, $$
is a parabola, embedded for example by the smooth map
\begin{align}
\phi_0(\xi) &=  \begin{pmatrix}
\xi \\ f_0(\xi)
\end{pmatrix} = \begin{pmatrix}
\xi \\ 1-\xi^2
\end{pmatrix}. \label{eq:parabparam0}
\end{align}
\noindent Recalling that the operator $D\mathcal{F}(\phi_0, 0)$ can also be written in the form \eqref{eq:operatorfactor}, one may compute that it is given by
 \begin{align*}
 D\mathcal{F}(\phi_0, 0) \,\begin{pmatrix}  \phi_{i,1} \\ \phi_{i,2} \\  r_i  \end{pmatrix} (\xi) = 
 \begin{pmatrix}
  4 \xi^2 & 2 \xi  & 1\\ 
  2\xi(1-\xi^2) & 1-\xi^2 & -2 \xi
    \end{pmatrix}\begin{pmatrix} \phi_{i,1}(\xi) \\ \phi_{i,2}(\xi) \\  r_i(\xi)  \end{pmatrix} \, .
\end{align*}
We now make a `graph style' ansatz for the embedding $\phi$ of $S^{\eps}_0$; that is, we take  
$\phi(\xi,\varepsilon) = (\xi,f_0(\xi) + \eps f_1(\xi) + \eps^2 f_2(\xi) + \cdots)$. Equivalently, we choose
\begin{align*}
\phi_i(\xi) &= \begin{pmatrix} \phi_{i,1}(\xi) \\ \phi_{i,2}(\xi) \end{pmatrix} = \begin{pmatrix} 0 \\ f_i(\xi) \end{pmatrix}.
\end{align*}
\noindent It turns out that with this choice, the surjective operator $D\mathcal{F}(\phi_0, 0)$ becomes an invertible operator $\Pi$ acting on $(f_i, r_i)$, because
\begin{align*}
 \begin{pmatrix}
  4 \xi^2 & 2 \xi  & 1\\ 
  2\xi(1-\xi^2) & 1-\xi^2 & -2 \xi
    \end{pmatrix}\begin{pmatrix} 0 \\ f_i(\xi) \\  r_i(\xi)  \end{pmatrix}
= 
\begin{pmatrix}  2 \xi & 1 \\  1 - \xi^2 & -2\xi  \end{pmatrix} \begin{pmatrix} f_i(\xi) \\ r_i(\xi)  \end{pmatrix} =:
\Pi(\xi)  \begin{pmatrix} f_i(\xi) \\ r_i(\xi)  \end{pmatrix}\, ,
\end{align*}
and $\Delta = \Delta(\xi):= - \det \Pi(\xi) = 1+3\xi^2 >0$.
We thus obtain the iterative formula
\begin{align}\label{eq:iterparab}
   \begin{pmatrix} f_i(\xi) \\ r_i(\xi) \end{pmatrix}  &=  \Pi(\xi)^{-1}G_i(\xi) =  \frac{1}{\Delta}\begin{pmatrix} 2\xi & 1 \\ 1-\xi^2 & -2\xi \end{pmatrix}G_i(\xi)\, . 
\end{align}
We proceed to compute the terms $r_1,f_1,r_2,f_2$ explicitly. Using \eqref{eq:iterparab} with $i=1$, we have
\begin{align*}
\begin{pmatrix} f_1(\xi) \\ r_1(\xi) \end{pmatrix} &=   \frac{1}{\Delta}
\begin{pmatrix} 2\xi & 1 \\ 1-\xi^2 & -2\xi \end{pmatrix} F_1(\phi_0(\xi)) \\
&=  \frac{1}{\Delta}\begin{pmatrix} 2\xi & 1 \\ 1-\xi^2 & -2\xi \end{pmatrix} \begin{pmatrix} 2\\ -\xi \end{pmatrix} = \frac{1}{\Delta}\begin{pmatrix} 3\xi \\ 2 \end{pmatrix}.
\end{align*}
We then compute $G_2$:
\begin{align*}
G_2(\xi) &= - D\phi_1(\xi) \cdot r_1(\xi) +  F_2(\phi_0(\xi))  + DF_1(\phi_0(\xi))\cdot \phi_1(\xi)  +  \frac{1}{2} D^2F_0(\phi_0(\xi))(\phi_1(\xi), \phi_1(\xi))\\
&= \begin{pmatrix} 0 \\ (18\xi^2-6)/\Delta^3 \end{pmatrix} + \begin{pmatrix}  0\\0 \end{pmatrix} + \begin{pmatrix} 0 \\ 0 \end{pmatrix} + \begin{pmatrix} 0 \\ -9\xi^2/\Delta^2 \end{pmatrix} = \begin{pmatrix} 0 \\ -3(9\xi^4 - 3\xi^2 + 2)/\Delta^3 \end{pmatrix}.
\end{align*}
So using \eqref{eq:iterparab} for $i = 2$, we find
\begin{align*}
\begin{pmatrix} f_2(\xi) \\ r_2(\xi) \end{pmatrix} &=   \frac{1}{\Delta}\begin{pmatrix} 2\xi & 1 \\ 1-\xi^2 & -2\xi \end{pmatrix}\begin{pmatrix}  0 \\  -3(9\xi^4-3\xi^2+2)/\Delta^3 \end{pmatrix} = \frac{1}{\Delta^4}\begin{pmatrix} -3(9\xi^4-3\xi^2+2) \\  6\xi(9\xi^4 - 3\xi^2+2)  \end{pmatrix}.
\end{align*}
Altogether, this produces the asymptotic expansions
\begin{align*}
r(\xi,\varepsilon) &= \eps \left(\frac{2}{\Delta} \right) + \eps^2 \frac{6\xi(9\xi^4 - 3\xi^2+2)}{\Delta^4}+ \mathcal{O}(\eps^3)\, ,\\
\phi(\xi,\varepsilon) &= \left(\xi, 1-\xi^2 + \eps \left( \frac{3\xi}{\Delta} \right) + \eps^2 \left( \frac{-3(9\xi^4-3\xi^2+2)}{\Delta^4} \right)+ \mathcal{O}(\eps^3)\right)\, .
\end{align*}
\noindent 
We finish this example by pointing out that the embedding $\phi$ of a slow manifold $S_0^\eps$ is not unique; correspondingly, there are many ways to obtain invertible restrictions of the surjective operator $D\mathcal{F}(\phi_0, 0)$. For example, if we let
\begin{align*}
\phi_i(\xi) &= \begin{pmatrix} A(\xi) \\ B(\xi)  \end{pmatrix} g_i(\xi)\, ,
\end{align*}
\noindent 
with $A,B,g_i$ smooth scalar functions, then
\begin{align*}
D\mathcal{F}(\phi_0, 0)\,\begin{pmatrix}  \phi_{i,1} \\ \phi_{i,2}\\ r_i \end{pmatrix} (\xi) &=  \begin{pmatrix} (B(\xi)+2A(\xi)\xi)(2\xi) & 1 \\
 (B(\xi)+2A(\xi)\xi)(1-\xi^2)& - 2\xi \end{pmatrix} \begin{pmatrix} g_i(\xi) \\ r_i(\xi) \end{pmatrix},
\end{align*}
\noindent
and so the operator $D\mathcal{F}(\phi_0, 0)$ restricts to an invertible operator for any nontrivial choice of $A(\xi)$ and $B(\xi)$, except if $B(\xi) = -2\xi A(\xi)$ for some $\xi$. The latter is the case precisely when 
\begin{align}
\begin{pmatrix} A(\xi) \\ B(\xi)  \end{pmatrix}  &= \begin{pmatrix} 1 \\ -2\xi \end{pmatrix}A(\xi) \in \text{im~}D\phi_0(\xi).
\end{align}
The iteration is thus well-defined as long as we seek $\phi_i$ that do not lie in the direction tangent to $S_0$. The choice of $A,B$ in the present example will nevertheless influence the resulting approximations. For example, suppose that we select
\begin{align*}
\phi_1(\xi) &= \begin{pmatrix} A(\xi) \\ B(\xi)  \end{pmatrix} g_1(\xi) = \begin{pmatrix} 2\xi\\ (1-\xi^2) \end{pmatrix} g_1(\xi).
\end{align*}
\noindent 
An identical computation to the above then gives 
\begin{align*}
r_1(\xi) &= \frac{2}{\Delta},\\
\phi_1(\xi) &= \begin{pmatrix} 2\xi \\ (1-\xi^2) \end{pmatrix} \frac{3\xi}{\Delta^2},\\
   r_2 (\xi) &= \frac{6\xi(2+\xi^2+9\xi^4)}{\Delta^4}, \\ &  \text{etc}\, . 
\end{align*}
Observe that these expressions begin to differ from those obtained with the graph style ansatz above.
 \end{ex}

\subsection{The fast fibre bundle and projection operators}
Our assumption that $S_0=\phi_0(U_1)$ consists of critical points of $F_0$ implies that $F_0 \circ \phi_0 = 0$. It follows that $DF_0(\phi_0(\xi)) \cdot D\phi_0(\xi) = 0$ for all $\xi\in U_1$. 
In other words, the tangent space 
$$T_{x}S_0 = {\rm im}\, D\phi_0(\xi)\, \quad  \mbox{where}\ x=\phi_0(\xi)   $$
lies inside the kernel of the Jacobian $DF_0(x)$.  In the following, we assume that this kernel is precisely $k$-dimensional and that $DF_0(x)$ is invertible in the direction transverse to $T_{x}S_0$, $x\in S_0$. More precisely, we make the following definition.

\begin{defi} \label{defnondegenerate} 
Let  $U_1\subset \mathbb{R}^k$  be an open set and let $\phi_0: U_1 \to \mathbb{R}^n$ be an embedding of the critical manifold $S_0$. 
We call $S_0$ {\it normally non-degenerate} if we have:
\begin{itemize}
\item[{\it i)}] a smooth embedding 
$$\Phi_0:U_1 \times \mathbb{R}^{n-k} \to  \mathbb{R}^{n} \times  \mathbb{R}^{n} \ \mbox{of the form}\ \Phi_0(\xi, w) = (\phi_0(\xi), N_0(\xi)\cdot w) \, ,$$
where $N_0: U_1 \to L(\mathbb{R}^{n-k},\mathbb{R}^{n})$ is a family of linear maps, and
\item[{\it ii)}]  a  family of linear invertible maps
$$
n_0: U_1 \to L(\mathbb{R}^{n-k},\mathbb{R}^{n-k})\,,
$$
with the property that 
 \begin{align}\label{linearconjugacy}
 DF_0(\phi_0(\xi)) \cdot N_0(\xi) = N_0(\xi) \cdot n_0(\xi) \ \mbox{for all}\ \xi\in U_1\, .
 \end{align}
 \end{itemize}
 In addition, if the linear maps $n_0(\xi)$, $\xi\in U_1$, have no eigenvalues on the imaginary axis, then $S_0$ is called {\it normally hyperbolic}.
 The corresponding vector bundle 
 $${\cal N}S_0 := \Phi_0(U_1\times \mathbb{R}^{n-k}) \subset \mathbb{R}^n \times \mathbb{R}^n$$
 is called the (linear) {\it fast fibre bundle} of $S_0$. 
\end{defi}


\noindent  The following gives some properties of the fast fibre bundle.
\begin{prop}\label{properties}\mbox{} \\ \vspace{-3mm}
\begin{itemize}
\item[{\it i)}] ${\cal N}S_0$ is a smooth submanifold of $\mathbb{R}^n\times \mathbb{R}^n$.
\item[{\it ii)}] Every linear map $N_0(\xi)$ is injective.
\item[{\it iii)}] $DF_0(\phi_0(\xi))$ leaves ${\rm im}\, N_0(\xi)$ invariant.
\item[{\it iv)}]  The restriction of $DF_0(\phi_0(\xi))$ to ${\rm im}\, N_0(\xi)$ is invertible.
\item[{\it v)}] $N_0(\xi)$ sends each (generalised) eigenvector of $n_0(\xi)$ to a (generalised) eigenvector of $DF_0(\phi_0(\xi))$ with the same eigenvalue.
\item[{\it vi)}] For every $\xi\in U_1$,  ${\rm im}\, N_0(\xi)$ is transverse to $T_{\phi_0(\xi)}S_0 = {\rm im}\, D\phi_0(\xi)$.
\item[{\it vii)}] The maps $n_0(\xi)$ depend smoothly on $\xi$.
\end{itemize}
\end{prop}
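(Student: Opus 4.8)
The plan is to prove the seven properties in the order stated, since they form a short chain of deductions from two structural inputs: that $\Phi_0$ is a smooth embedding, and the intertwining identity \eqref{linearconjugacy}, namely $DF_0(\phi_0(\xi))\,N_0(\xi) = N_0(\xi)\,n_0(\xi)$ with $n_0(\xi)$ invertible. From the first input I read off that $N_0$ is a smooth family of linear maps (its matrix entries are smooth functions of $\xi$), and that for each fixed $\xi$ the partial map $w \mapsto N_0(\xi)w$ is injective.

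Items i)--v) are then essentially formal. Property i) is the statement that the image of a smooth embedding is a smooth submanifold. For ii), injectivity of $N_0(\xi)$ follows because $N_0(\xi)w_1 = N_0(\xi)w_2$ gives $\Phi_0(\xi,w_1) = \Phi_0(\xi,w_2)$, hence $w_1 = w_2$; in particular $\operatorname{im} N_0(\xi)$ is $(n-k)$-dimensional. For iii), any $v = N_0(\xi)w$ satisfies $DF_0(\phi_0(\xi))v = N_0(\xi)\,n_0(\xi)w \in \operatorname{im} N_0(\xi)$ by \eqref{linearconjugacy}. For iv), I would view $N_0(\xi)$ as an isomorphism onto $\operatorname{im} N_0(\xi)$ and note that \eqref{linearconjugacy} expresses the restriction of $DF_0(\phi_0(\xi))$ to $\operatorname{im} N_0(\xi)$ as the conjugate $N_0(\xi)\,n_0(\xi)\,N_0(\xi)^{-1}$ of the invertible map $n_0(\xi)$, hence it is invertible. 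For v), the key is the identity $(DF_0(\phi_0(\xi)) - \lambda I)^j N_0(\xi) = N_0(\xi)(n_0(\xi) - \lambda I)^j$, which follows from \eqref{linearconjugacy} by a one-line induction on $j$; applying it to a (generalised) eigenvector $w$ of $n_0(\xi)$ and invoking injectivity of $N_0(\xi)$ (so $N_0(\xi)w \neq 0$) shows that $N_0(\xi)w$ is a (generalised) eigenvector of $DF_0(\phi_0(\xi))$ for the same eigenvalue.

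For vi), I would combine iv) with the standing assumption (made just before Definition \ref{defnondegenerate}) that $\ker DF_0(\phi_0(\xi))$ is exactly $k$-dimensional, so that it coincides with $T_{\phi_0(\xi)}S_0 = \operatorname{im} D\phi_0(\xi)$. If $v \in \operatorname{im} N_0(\xi) \cap \operatorname{im} D\phi_0(\xi)$, then $v \in \ker DF_0(\phi_0(\xi))$, and iv) forces $v = 0$; since $\dim \operatorname{im} N_0(\xi) + \dim \operatorname{im} D\phi_0(\xi) = (n-k) + k = n$, the trivial intersection upgrades to the transverse (indeed direct-sum) decomposition $\operatorname{im} N_0(\xi) \oplus \operatorname{im} D\phi_0(\xi) = \mathbb{R}^n$.

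The only step needing real care is vii), since $n_0$ is characterised only implicitly by \eqref{linearconjugacy} and is not assumed smooth. The plan is to invert \eqref{linearconjugacy} using a smoothly varying left inverse of $N_0$. Because each $N_0(\xi)$ is injective, the Gram matrix $N_0(\xi)^{\top}N_0(\xi)$ is positive definite, depends smoothly on $\xi$, and therefore has a smooth inverse; hence $N_0(\xi)^{+} := (N_0(\xi)^{\top}N_0(\xi))^{-1}N_0(\xi)^{\top}$ is a smooth family of left inverses with $N_0(\xi)^{+}N_0(\xi) = I$. Left-multiplying \eqref{linearconjugacy} by $N_0(\xi)^{+}$ yields the explicit formula $n_0(\xi) = N_0(\xi)^{+}\,DF_0(\phi_0(\xi))\,N_0(\xi)$, a product of smooth matrix-valued functions (with $DF_0 \circ \phi_0$ smooth as a composition), which is therefore smooth. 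The main obstacle is precisely this smooth choice of left inverse; the Gram-matrix construction resolves it cleanly because inversion is smooth on the everywhere-invertible, smoothly varying family $N_0(\xi)^{\top}N_0(\xi)$.
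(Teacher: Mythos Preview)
Your proof is correct and follows essentially the same route as the paper. The only minor differences are cosmetic: for \textit{ii)} you use global injectivity of the embedding $\Phi_0$ whereas the paper uses injectivity of $D\Phi_0$ (both being part of the definition of an embedding), and for \textit{vi)} you invoke the assumption that $\ker DF_0(\phi_0(\xi))$ is exactly $k$-dimensional, while the paper gets by with the weaker inclusion ${\rm im}\, D\phi_0(\xi)\subseteq \ker DF_0(\phi_0(\xi))$---but your argument is still valid. Your formula for $n_0(\xi)$ in \textit{vii)} is exactly the one the paper derives.
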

\begin{proof}\mbox{} \\ \vspace{-3mm}
\begin{itemize}
\item[{\it i)}]  ${\cal N}S_0=\Phi_0(U_1\times \mathbb{R}^{n-k})$ is the image of a smooth embedding.
\item[{\it ii)}] The assumption that $\Phi_0$ is an embedding implies in particular that every $D\Phi_0(\xi, w)$ is injective. But
$$D\Phi_0(\xi, w) = \left( \begin{array}{cc} D\phi_0(\xi) & 0 \\ \partial_{\xi}N_0(\xi)\cdot w & N_0(\xi)\end{array} \right) \, .$$
So not only $D\phi_0(\xi)$ but also $N_0(\xi)$ is injective for every $\xi$. 

 \item[{\it iii)}]  If $v\in {\rm im}\, N_0(\xi)$, then $v=N_0(\xi)\cdot w$ for some $w$, so $DF_0(\phi_0(\xi)) \cdot v = DF_0(\phi_0(\xi)) \cdot N_0(\xi)\cdot w = N_0(\xi)\cdot n_0(\xi) \cdot w\in {\rm im}\, N_0(\xi)$.
 \item[{\it iv)}] Assume that the restriction of $DF_0(\phi_0(\xi))$ to ${\rm im}\, N_0(\xi)$ is not invertible. It follows that $DF_0(\phi_0(\xi)) \cdot N_0(\xi) = N_0(\xi)\cdot n_0(\xi)$ is not injective. Since $N_0(\xi)$ is injective, this contradicts the injectivity of $n_0(\xi)$.
 \item[{\it v)}]  Suppose $n_0(\xi)w=\lambda w$. Then $DF_0(\phi_0(\xi)) (N_0(\xi) w) = N_0(\xi) n_0(\xi) w = N_0(\xi) \lambda w = \lambda (N_0(\xi)  w)$. For generalised eigenvectors, the argument is similar: if $(n_0(\xi)-\lambda)^k w = 0$ then $(DF_0(\phi_0(\xi)) - \lambda)^k (N_0(\xi) w)= N_0(\xi)  (n_0(\xi) -\lambda)^k w =0$.
 \item[{\it vi)}] Assuming ${\rm im}\, N_0(\xi)$ is not transversal to ${\rm im}\, D\phi_0(\xi)$, these spaces would non-trivially intersect, because $(n-k)+k=n$. This would imply that $DF_0(\phi_0(\xi))\cdot N_0(\xi)=N_0(\xi)\cdot n_0(\xi)$ is not injective because $DF_0(\phi_0(\xi))\cdot D\phi_0(\xi)=0$. This contradicts the injectivity of $n_0(\xi)$. 
  \item[{\it vii)}] Equation (\ref{linearconjugacy}) implies that $$N_0(\xi)^*DF_0(\phi_0(\xi))N_0(\xi) = N_0(\xi)^* N_0(\xi) n_0(\xi)\, ,$$
  where the superscript $^*$ denotes the matrix transpose.
Since $N_0(\xi)$ is injective, $N_0(\xi)^*N_0(\xi)$ is invertible and 
$$n_0(\xi) = (N_0(\xi)^* N_0(\xi))^{-1} N_0(\xi)^*DF_0(\phi_0(\xi))N_0(\xi)\, .$$
This shows that $n_0(\xi)$ is a smooth function of $\xi$.
\end{itemize}
\end{proof}


\begin{remk} 
Our definition that the linear fast fibre bundle ${\cal N}S_0$ is the image of an embedding is more restrictive than the usual one, which only asks for a family of fast subspaces on which $DF_0(\phi_0(\xi))$ is invertible. In this more general case, a global embedding of the fast fibre bundle may not exist. For example, if $S_0$ is a M\"obius strip embedded in $\mathbb{R}^3$ then its normal bundle does not admit a global normal vector. 
\end{remk}

\begin{remk} \label{remk:Nfsplitting2}
In the case that $F_0$ splits as $F_0(x) = {\bf N}_0(x)\cdot { f}_0(x)$ as in Remark \ref{remk:Nfsplitting}, we have
  $T_{x}S_0 = \mbox{ker}\, Df_0(x)$. 
 At the same time,
 $$
 DF_0(x) = {\bf N}_0(x)\cdot Df_0(x)  + D{\bf N}_0(x)\cdot f_0(x)= {\bf N}_0(x)\cdot Df_0(x) \ \mbox{for}\ x\in S_0\,.
 $$
Since $Df_0(x)$ is surjective, it follows that 
$$
{\rm im}\, DF_0(x) = {\rm im}\, {\bf N}_0(x)\,,\qquad \forall x \in S_0\, .
$$
So, under the assumption that ${\bf N}_0(x)$ is transverse to $\ker Df_0(x)$ for all $x\in S_0$, the column vectors of the matrices ${\bf N}_0(x),\, x\in S_0$, span the fast fibre bundle ${\cal N}S_0$. If in addition, a global embedding $\phi_0$ of $S_0$ is given, then we can define $N_0(\xi):={\bf N}_0(\phi_0(\xi))$. The map $\Phi_0(\xi, w) = (\phi_0(\xi), N_0(\xi)\cdot w)$ will then be a global embedding of ${\cal N}S_0$.

In the example system \eqref{eq:parab}, the leading-order term $F_0(x_1,x_2)$ factors as  above, where we may choose for example 
$${\bf N}_0(x_1, x_2) =\begin{pmatrix} 2x_1 \\ x_2 \end{pmatrix}\, .$$
Choosing again $\phi_0(\xi) = (\xi, 1-\xi^2)$ as  global embedding for $S_0$, yields $N_0(\xi)  = \begin{pmatrix} 2\xi \\ 1-\xi^2 \end{pmatrix}\, $
and hence the global embedding of ${\cal N}S_0$ is  given by:
\begin{align*}
\Phi_0(\xi,w) &= (\xi,1-\xi^2,2\xi w, (1-\xi^2)w)\, .
\end{align*}
\end{remk}
\noindent
Under the assumption that the critical manifold $S_0$ is normally non-degenerate (see Definition~\ref{defnondegenerate}), we have the splitting
$$
T_x\mathbb{R}^n=T_x S_0 \oplus {\rm im}\, N_0(\xi) = {\rm im}\, D\phi_0(\xi) \oplus {\rm im}\, N_0(\xi) \,,\qquad \forall\,  x=\phi_0(\xi)\in S_0\,.
$$
This allows us to define a unique family of projection maps $P_0(\xi):\mathbb{R}^n\to \mathbb{R}^n$ (one for each $\xi \in U_1$) where $P_0(\xi)$ projects onto the tangent space $T_xS_0 = {\rm im}\, D\phi_0(\xi)$ along ${\rm im}\, N_0(\xi)$. In the following, we provide a formula for $P_0(\xi)$.
We start by recalling a well-known fact.
\begin{prop}
Let $N:  \mathbb{R}^{n-k}\to \mathbb{R}^n$ be an injective linear map. Then $N^*N$ is invertible and 
$$\pi(N):= (1 - N(N^*N)^{-1}N^*)\, $$ 
is the orthogonal projection onto $\ker N^*$ along ${\rm im}\, N$. 
 \end{prop}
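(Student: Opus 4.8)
The plan is to establish the invertibility of $N^*N$ first, and then to show that the map $P := N(N^*N)^{-1}N^*$ is the orthogonal projection onto ${\rm im}\, N$; the claim then follows because $\pi(N) = 1 - P$ must be the complementary orthogonal projection.

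For the invertibility of $N^*N$, I would use that $N$ is injective: for any nonzero $w \in \mathbb{R}^{n-k}$ one has $\langle N^*N w, w\rangle = \|N w\|^2 > 0$, so the symmetric $(n-k)\times(n-k)$ matrix $N^*N$ is positive definite, and in particular invertible. (This is also the point noted in the statement itself.)

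Next I would verify that $P$ is an orthogonal projection by checking the two defining properties. Idempotency $P^2 = P$ follows from the telescoping cancellation $(N^*N)^{-1}(N^*N) = 1$ in the middle of $P^2 = N(N^*N)^{-1}(N^*N)(N^*N)^{-1}N^*$, and symmetry $P^* = P$ follows because $(N^*N)^{-1}$ is itself symmetric. To pin down the image, note that ${\rm im}\, P \subseteq {\rm im}\, N$ is immediate from the outer factor $N$, while $P(Nw) = Nw$ for every $w$ (again by the cancellation), so $P$ restricts to the identity on ${\rm im}\, N$ and hence ${\rm im}\, N \subseteq {\rm im}\, P$. Therefore ${\rm im}\, P = {\rm im}\, N$, and $P$ is precisely the orthogonal projection onto ${\rm im}\, N$.

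Finally, since $P$ is the orthogonal projection onto ${\rm im}\, N$, the complement $\pi(N) = 1 - P$ is the orthogonal projection onto $({\rm im}\, N)^\perp$ along ${\rm im}\, N$. Invoking the standard identity $({\rm im}\, N)^\perp = \ker N^*$ identifies this target space and completes the argument. There is no genuinely hard step here: the proof is entirely routine linear algebra. The only point requiring a moment's care is matching the ``along ${\rm im}\, N$'' direction to orthogonality, i.e.\ recognising that the range and kernel of the orthogonal projection $\pi(N)$ are the mutually orthogonal subspaces $\ker N^*$ and ${\rm im}\, N$.
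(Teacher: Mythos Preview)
Your proof is correct. The paper takes a slightly different route: rather than starting from the formula and verifying idempotency, symmetry, and the image, it starts from the defining property of the projection onto $\ker N^*$ along ${\rm im}\, N$, writes $\pi(N)z = z - Ny$, and solves the normal equation $N^*(z - Ny) = 0$ for $y$ to obtain $y = (N^*N)^{-1}N^*z$, thereby deriving the formula. Your verification approach is equally standard and arguably cleaner, since it makes the orthogonality (via $P^* = P$) explicit and also spells out why $N^*N$ is invertible, which the paper leaves implicit. The paper's derivation, on the other hand, has the mild advantage of showing where the formula comes from rather than checking it after the fact. Both are routine linear algebra and neither gains anything substantial over the other.
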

\begin{proof}
First of all, recall that $\ker N^*$ and ${\rm im}\, N$ are each others orthogonal complements. Denote by $\pi(N)$ the projection onto $\ker N^*$ along ${\rm im}\, N$. Then 
$\pi(N)z = z-Ny$ where $y$ is  determined by the requirement that $N^*z-N^*Ny=0$, i.e., $y=-(N^*N)^{-1}N^*z$. This proves that $\pi(N)z=z-N(N^*N)^{-1}N^*z$.
\end{proof}

\begin{lem}\label{projectionlemma}
Let $M:  \mathbb{R}^{k}\to \mathbb{R}^n$ and $N:  \mathbb{R}^{n-k}\to \mathbb{R}^n$ be  transverse linear maps (i.e., their images together span $\mathbb{R}^n$). Then $M^*\pi(N) M$ 
is invertible. 

Denote by $P(M,N): \mathbb{R}^n \to \mathbb{R}^n$ the projection onto ${\rm im} \, M$ along ${\rm im}\, N$, so $P(M,N)$ is the unique linear map satisfying 
$$P(M,N)M=M \ \mbox{and}\ P(M,N)N=0\, .$$ 
Then
\begin{equation}
    P(M,N) = M (M^*\pi(N)M)^{-1}M^*\pi(N)  \, . \label{eq:proj}
\end{equation}
\end{lem}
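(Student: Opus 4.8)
The plan is to first extract from the transversality hypothesis the structural facts I will use repeatedly, then to settle the invertibility of $M^*\pi(N)M$, and finally to verify that the proposed formula is the unique projection onto ${\rm im}\, M$ along ${\rm im}\, N$.

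First I would record the consequences of $M$ and $N$ being transverse. Since $\dim {\rm im}\, M \le k$ and $\dim {\rm im}\, N \le n-k$, the requirement that their images together span $\mathbb{R}^n$ forces both inequalities to be equalities; hence $M$ and $N$ are injective and $\mathbb{R}^n = {\rm im}\, M \oplus {\rm im}\, N$ is a direct sum, so in particular ${\rm im}\, M \cap {\rm im}\, N = \{0\}$. I would also recall from the preceding proposition that $\pi(N) = 1 - N(N^*N)^{-1}N^*$ is the orthogonal projection onto $\ker N^*$ along ${\rm im}\, N$; thus $\pi(N)$ is symmetric and idempotent, $\pi(N)^* = \pi(N) = \pi(N)^2$, and it annihilates ${\rm im}\, N$, i.e. $\pi(N) N = 0$ and $\ker \pi(N) = {\rm im}\, N$.

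Next comes the invertibility of the $k \times k$ matrix $M^*\pi(N)M$, for which it suffices to prove injectivity. I would suppose $M^*\pi(N)M v = 0$ and pair with $v$: using the symmetry and idempotency of $\pi(N)$ gives $0 = v^* M^* \pi(N) M v = v^* M^* \pi(N)^2 M v = \| \pi(N) M v \|^2$, so $\pi(N) M v = 0$. This says $M v \in \ker \pi(N) = {\rm im}\, N$; but also $M v \in {\rm im}\, M$, and since ${\rm im}\, M \cap {\rm im}\, N = \{0\}$ we obtain $M v = 0$, whence $v = 0$ by injectivity of $M$. This quadratic-form argument, which hinges on the direct-sum decomposition established above, is the technical heart of the lemma and the step I expect to require the most care.

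Finally I would verify the formula. With $P := M(M^*\pi(N)M)^{-1}M^*\pi(N)$ now well-defined, direct substitution gives $P M = M(M^*\pi(N)M)^{-1}(M^*\pi(N)M) = M$, while $\pi(N) N = 0$ immediately yields $P N = 0$. It then remains to observe that these two identities characterise $P$ uniquely: every $z \in \mathbb{R}^n$ decomposes uniquely as $z = M u + N w$, and any linear map with $PM = M$ and $PN = 0$ must send $z \mapsto M u$, which is precisely the projection onto ${\rm im}\, M$ along ${\rm im}\, N$. Hence the map defined by \eqref{eq:proj} is this projection, completing the proof.
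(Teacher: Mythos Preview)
Your proof is correct and follows essentially the same approach as the paper's: both reduce the invertibility of $M^*\pi(N)M$ to properties of the map $\pi(N)M$ via the identity $M^*\pi(N)M = (\pi(N)M)^*(\pi(N)M)$, and both verify the projection formula by checking $PM=M$ and $PN=0$. The only minor variation is that you establish injectivity of $\pi(N)M$ directly via the quadratic-form argument, whereas the paper instead shows surjectivity of $\pi(N)M$ onto $\ker N^*$ and then invokes a dimension count.
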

\begin{proof}
It is not hard to check that $P(M,N)$ satisfy $P(M,N)M=M$ and $P(M,N)N=0$ (note that $\pi(N)N=0$). Therefore $P(M,N)$ is the required projection. It remains to prove that $M^*\pi(N) M$ is invertible, so that the formula provided for $P(M,N)$ is well defined.

To prove this, we first note that $\pi(N)M: \mathbb{R}^k\to \ker N^*$. Now assume that $z\in \ker \, N^*$. Because $M$ and $N$ are transversal, there are (unique) $x$ and $y$ so that $z=Mx+Ny$. But then $0=N^*Mx+N^*Ny$ so $y=-(N^*N)^{-1}N^*Mx$ and thus $z=Mx  - N(N^*N)^{-1}N^*Mx = \pi(N)Mx$.  So $\pi(N)M$ is surjective from $\mathbb{R}^k$ to $\ker N^*$. But $\mathbb{R}^k$ and $\ker N^*$ have equal dimension $k$, so $\pi(N)M$ is injective. Hence, $(\pi(N)M)^*(\pi(N)M)=M^*\pi(N)^*\pi(N)M = M^*\pi(N)^2M=M^*\pi(N)M$ is invertible.
 \end{proof}
 \noindent 
 Applied to $M=D\phi_0(\xi)$ and $N=N_0(\xi)$, Lemma \ref{projectionlemma} gives a formula for the projection map $P_0(\xi)$. 
 It implies in particular  that $P_0(\xi)$ is a smooth function of $\xi$:
 
 \begin{cor}
Let $\phi_0: U_1 \to \mathbb{R}^n$ and $N_0: U_1 \to L(\mathbb{R}^{n-k}, \mathbb{R}^n)$ be smooth functions satisfying the conditions of Definition \ref{defnondegenerate}. For each $\xi \in U_1$, let $P_0(\xi): \mathbb{R}^n\to  \mathbb{R}^n$ denote the projection  onto ${\rm im}\, D\phi_0(\xi)$ along ${\rm im}\, N_0(\xi)$. 

Then $P_0(\xi)$ depends smoothly on $\xi$, that is, the map  $P_0: U_1 \to L(\mathbb{R}^n, \mathbb{R}^n)$ is smooth. 
\end{cor}
 \begin{proof}
 Immediate from Proposition \ref{properties} {\it vi)} and formula (\ref{eq:proj}) in Lemma \ref{projectionlemma}.
 \end{proof}

\begin{remk}
Consider again the parabolic example \eqref{eq:parab}. From \eqref{eq:parabparam0} and Remark \ref{remk:Nfsplitting2} we have 
\begin{align*}
D\phi_0(\xi) &= \begin{pmatrix} 1 \\ -2\xi \end{pmatrix} \ \mbox{and}\ 
N_0(\xi) = \begin{pmatrix}2\xi \\ 1-\xi^2  \end{pmatrix},
\end{align*}
so for each point $x = (\xi,1-\xi^2) \in S_0$, we are furnished with the tangent splitting
\begin{align*}
T_x\mathbb{R}^2 = T_x S_0 \oplus {\rm im}\, N_0(\xi) = {\rm im}\, D\phi_0(\xi) \oplus {\rm im}\, N_0(\xi) 
= \text{im}\,  \begin{pmatrix} 1 \\ -2\xi \end{pmatrix} \oplus  \text{im}\,  \begin{pmatrix} 2\xi \\ 1-\xi^2 \end{pmatrix}.
\end{align*}
Using \eqref{eq:proj}, we find that the corresponding projection $P_0(\xi)$ is given by
\begin{align*}
P_0(\xi) &= D\phi_0(\xi) (D\phi_0(\xi)^* \pi(N_0(\xi)) D\phi_0(\xi))^{-1} D\phi_0(\xi)^* \pi(N_0(\xi))\\
&= \frac{1}{\Delta} \begin{pmatrix} 1-\xi^2 & -2\xi \\ -2\xi (1-\xi^2) & 4\xi^2 \end{pmatrix}\, ,
\end{align*}
where we remind the reader that $\Delta = 1+3\xi^2$. 
\end{remk}

\begin{remk} \label{remk:alternative-proj}
In the case that $F_0$ splits as $F_0(x) = {\bf N}_0(x)\cdot { f}_0(x)$ as in Remark \ref{remk:Nfsplitting} and $S_0$ is normally hyperbolic, we have the alternative formula
$$
P_0(\xi)=1-{\bf N}_0(\phi_0(\xi))(Df_0(\phi_0(\xi)) {\bf N}_0(\phi_0(\xi)))^{-1} Df_0(\phi_0(\xi))
$$
for the projection operator (see, e.g.,\cite{lizarraga2020, wechselberger2020}).
\end{remk}

\subsection{Solving the conjugacy equation}

We are now in a position to present the first main result, which states that the infinitesimal conjugacy equations can all be solved.

\begin{thr}\label{infconjthm}
Consider a singularly perturbed system in its general form \eqref{eq:singpert}
where $S_0$ is normally hyperbolic and the image of a smooth embedding $\phi_0:U_1\to \mathbb{R}^n$.
For any sequence of smooth functions $X_i:U_1\to\mathbb{R}^k$, $i = 1,2,\cdots$, there exist uniquely determined sequences of smooth functions $Y_i: U_1\to \mathbb{R}^{n-k}$, $\phi_i: U_1\to \mathbb{R}^n$, and $r_i:U_1\to \mathbb{R}^k$, which simultaneously satisfy 
\begin{align}\label{phiidef}
\phi_i(\xi) = D\phi_0(\xi)\cdot X_i(\xi) + N_0(\xi)\cdot Y_i(\xi)
\end{align}
and the infinitesimal conjugacy equations
\begin{align} \label{solveinf}
D\phi_0(\xi)  \cdot  r_i(\xi)  - DF_0(\phi_0(\xi)) \cdot  \phi_i(\xi) = G_i(\xi)\, ,
\end{align}  
where the smooth inhomogeneities $G_i: U_1\to \mathbb{R}^n$ are recursively defined in \eqref{eq:G-recursive}.
\end{thr}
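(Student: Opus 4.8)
The plan is to solve the infinitesimal conjugacy equation \eqref{solveinf} directly, exploiting the splitting $\mathbb{R}^n = {\rm im}\, D\phi_0(\xi) \oplus {\rm im}\, N_0(\xi)$ supplied by normal non-degeneracy. First I would substitute the prescribed ansatz \eqref{phiidef}, $\phi_i = D\phi_0 \cdot X_i + N_0 \cdot Y_i$, into \eqref{solveinf}. Using the two structural identities $DF_0(\phi_0)\cdot D\phi_0 = 0$ (which holds because $S_0$ consists of critical points of $F_0$) and the linear conjugacy relation \eqref{linearconjugacy}, namely $DF_0(\phi_0)\cdot N_0 = N_0 \cdot n_0$, the equation collapses to
$$D\phi_0(\xi)\cdot r_i(\xi) - N_0(\xi)\cdot n_0(\xi)\cdot Y_i(\xi) = G_i(\xi)\,.$$
The crucial observation is that the tangential term $X_i$ has dropped out entirely; this reflects the fact that the tangential component of $\phi_i$ is pure gauge (a reparametrisation freedom of the slow manifold), which is exactly why $X_i$ may be prescribed arbitrarily while the remaining data are still forced.

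Next I would decouple this reduced equation along the two summands using the projection $P_0(\xi)$ onto ${\rm im}\, D\phi_0(\xi)$ along ${\rm im}\, N_0(\xi)$, whose smoothness is established in the corollary following Lemma \ref{projectionlemma}. Applying $P_0$ annihilates the $N_0$-term and fixes the $D\phi_0$-term, giving $D\phi_0 \cdot r_i = P_0 G_i$; applying $1 - P_0$ annihilates the $D\phi_0$-term and fixes the $N_0$-term, giving $-N_0 \cdot n_0 \cdot Y_i = (1 - P_0) G_i$. Each piece is then inverted: since $D\phi_0(\xi)$ is injective and $P_0 G_i \in {\rm im}\, D\phi_0$, there is a unique $r_i$, given explicitly by $r_i = (D\phi_0^* D\phi_0)^{-1} D\phi_0^* P_0 G_i$; and since $N_0(\xi)$ is injective (Proposition \ref{properties} {\it ii)}), $n_0(\xi)$ is invertible, and $(1-P_0)G_i \in {\rm im}\, N_0$, there is a unique $Y_i$, given by $Y_i = -n_0(\xi)^{-1} (N_0^* N_0)^{-1} N_0^* (1 - P_0) G_i$. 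Setting $\phi_i = D\phi_0 \cdot X_i + N_0 \cdot Y_i$ completes the construction.

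Finally I would assemble the existence, uniqueness and smoothness claims. Existence is witnessed by the explicit formulas above. Uniqueness follows because the projection argument shows that \emph{any} solution must satisfy $D\phi_0 \cdot r_i = P_0 G_i$ and $-N_0 \cdot n_0 \cdot Y_i = (1-P_0)G_i$, and injectivity of $D\phi_0$, injectivity of $N_0$, and invertibility of $n_0$ then force $r_i$ and $Y_i$ (whence $\phi_i$) to be exactly the expressions found. For smoothness I would note that $\phi_0$, $N_0$, $P_0$, $n_0$ (Proposition \ref{properties} {\it vii)}), and $G_i$ (smooth by induction, since it is built from the smooth data $\phi_0,\ldots,\phi_{i-1},r_1,\ldots,r_{i-1}$) are all smooth, that the Gram matrices $D\phi_0^* D\phi_0$ and $N_0^* N_0$ are invertible with smooth inverses, and that $n_0^{-1}$ is smooth since $n_0$ is smooth and invertible; hence $r_i$, $Y_i$, and $\phi_i$ are smooth. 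There is no single hard obstacle here: the argument is essentially a direct computation, and the main conceptual point to get right is recognising that normal hyperbolicity furnishes precisely the splitting and the invertible map $n_0$ needed to decouple \eqref{solveinf} into two independently solvable pieces, together with the bookkeeping verification that $P_0 G_i$ and $(1-P_0)G_i$ land in ${\rm im}\, D\phi_0$ and ${\rm im}\, N_0$ respectively, so that the stated left inverses genuinely return the unique solutions.
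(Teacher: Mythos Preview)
Your proposal is correct and follows essentially the same approach as the paper: substitute the ansatz, use the identities $DF_0(\phi_0)\cdot D\phi_0 = 0$ and $DF_0(\phi_0)\cdot N_0 = N_0\cdot n_0$ to reduce the equation to $D\phi_0\cdot r_i - N_0\cdot n_0\cdot Y_i = G_i$, then project with $P_0$ and $1-P_0$ and invert using the Moore--Penrose pseudo-inverses and $n_0^{-1}$ to obtain exactly the formulas \eqref{eq:ri} and \eqref{eq:yi}. Your write-up is in fact slightly more detailed than the paper's on the uniqueness and smoothness bookkeeping, but the argument is the same.
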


\begin{proof}
The ansatz $\phi_i(\xi) = D\phi_0(\xi)\cdot X_i(\xi) + N_0(\xi) \cdot Y_i(\xi)$  transforms the infinitesimal conjugacy equation into 
$$ D\phi_0(\xi) \cdot r_i(\xi)  -   N_0(\xi) \cdot n_0(\xi) \cdot Y_i(\xi) = G_i(\xi)\, .$$
This is true because $DF_0(\phi_0(\xi))\cdot D\phi_0(\xi)=0$ and $DF_0(\phi_0(\xi))\cdot N_0(\xi) = N_0(\xi)\cdot n_0(\xi)$.  
Projecting onto ${\rm im}\, D\phi_0(\xi)$ along ${\rm im}\ N_0(\xi)$ we therefore find 
 $$ D\phi_0(\xi)  \cdot r_i(\xi) = P_0(\xi)(G_i(\xi))\, .$$
 We then apply a left inverse\footnote{In both \eqref{eq:ri} and \eqref{eq:yi}, the left-inverse that we apply is the {\it Moore-Penrose pseudo-inverse}. For any matrix $A$ with full column rank, this pseudo-inverse is defined as $A^+:= (A^* A)^{-1}A^*$. } 
 to obtain
%
\begin{align}  \label{eq:ri}
r_i(\xi) &= (D\phi_0(\xi)^* D\phi_0(\xi))^{-1} D\phi_0(\xi)^* P_0(\xi)(G_i(\xi)) \,.
\end{align}
This formula shows in particular that $r_i$ is unique and depends smoothly on $\xi$.
%
%
Projecting onto ${\rm im}\, N_0(\xi)$ we find on the other hand that 
$$ -N_0(\xi) n_0(\xi) Y_i(\xi) =  (1-P_0(\xi)) (G_i(\xi)) \, .$$
 This implies that 
 \begin{align} \label{eq:yi}
Y_i(\xi) = -n_0(\xi)^{-1} (N_0(\xi)^* N_0(\xi))^{-1} N_0(\xi)^* (1-P_0(\xi)) G_i(\xi) 
\end{align}
is also unique and depends smoothly on $\xi$ as well.
There is no restriction on $X_i(\xi)$.
\end{proof}

\noindent The next corollary states that the conjugacy equation can be solved to arbitrarily high order.
\begin{cor} \label{maincor1}
Let $\phi_0: U_1\to \mathbb{R}^n$ be fixed, and let 
$$X_{(m)} = \varepsilon X_1 + \varepsilon^2 X_2 + \ldots + \varepsilon^m X_m : U_1 \times [0,\eps_0)\to \mathbb{R}^k$$ 
be a smooth function. Then there  are unique smooth functions
\begin{align} \nonumber
& \phi_{(m)} = \phi_0 + \varepsilon \phi_1 + \varepsilon^2 \phi_2 + \ldots + \varepsilon^m \phi_m: U_1 \times [0,\eps_0)\to \mathbb{R}^n\, , \\ \nonumber
& Y_{(m)} = \varepsilon Y_1 + \varepsilon^2 Y_2 + \ldots + \varepsilon^m Y_m: U_1 \times [0,\eps_0)\to \mathbb{R}^{n-k} \, ,\\ \nonumber
& r_{(m)} = \varepsilon r_1 + \varepsilon^2 r_2 + \ldots  + + \varepsilon^m r_m:U_1 \times [0,\eps_0)\to \mathbb{R}^k\, ,
\end{align}
satisfying $\phi_{(m)} = \phi_0 + D\phi_0 \cdot X_{(m)} + N_0 \cdot Y_{(m)}$ and 
$$\mathcal{F}(\phi_{(m)}, r_{(m)})(\xi, \eps) = \mathcal{O}(\varepsilon^{m+1})\, .$$
\end{cor}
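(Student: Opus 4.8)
The plan is to obtain Corollary \ref{maincor1} as a direct inductive consequence of Theorem \ref{infconjthm}, with no new analysis required beyond careful bookkeeping of the $\varepsilon$-expansion. The crucial structural fact, already recorded in \eqref{eq:G-recursive}, is that the inhomogeneity $G_i$ in the $i$-th infinitesimal conjugacy equation depends only on the lower-order data $\phi_0, \ldots, \phi_{i-1}$ and $r_1, \ldots, r_{i-1}$. This triangular (causal) dependence is exactly what permits the unknowns to be solved one order at a time.

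First I would set up the induction on $i = 1, \ldots, m$. Suppose $\phi_0, \ldots, \phi_{i-1}$ and $r_1, \ldots, r_{i-1}$ have already been constructed, uniquely, and are smooth; then $G_i = G_i(\phi_0, \ldots, \phi_{i-1}, r_1, \ldots, r_{i-1})$ is a well-defined smooth function of $\xi$. Feeding the prescribed $X_i$ together with this $G_i$ into Theorem \ref{infconjthm} produces unique smooth $Y_i$ and $r_i$, and hence via \eqref{phiidef} a unique smooth $\phi_i = D\phi_0 \cdot X_i + N_0 \cdot Y_i$ solving \eqref{solveinf}. Collecting the results for $i = 1, \ldots, m$ and forming the truncated sums yields $\phi_{(m)}, Y_{(m)}, r_{(m)}$ with the required decomposition $\phi_{(m)} = \phi_0 + D\phi_0 \cdot X_{(m)} + N_0 \cdot Y_{(m)}$; uniqueness is inherited order-by-order from the uniqueness clause of Theorem \ref{infconjthm}.

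It then remains to verify the estimate $\mathcal{F}(\phi_{(m)}, r_{(m)}) = \mathcal{O}(\varepsilon^{m+1})$. Here I would expand $\mathcal{F}(\phi_{(m)}, r_{(m)}) = D\phi_{(m)} \cdot r_{(m)} - F(\phi_{(m)}, \varepsilon)$ in powers of $\varepsilon$, Taylor-expanding each $F_p(\phi_{(m)})$ about $\phi_0$. For every $i \le m$, the coefficient of $\varepsilon^i$ involves $\phi_i$ and $r_i$ only through the linear combination $D\phi_0 \cdot r_i - DF_0(\phi_0) \cdot \phi_i$ (arising from $j=0$ in $D\phi \cdot r$ and from $p=0$ in $F(\phi,\varepsilon)$), while all remaining contributions assemble precisely into $G_i$; this is exactly the content of the recurrence \eqref{eq:G-recursive}. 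Hence the $\varepsilon^i$-coefficient equals $D\phi_0 \cdot r_i - DF_0(\phi_0) \cdot \phi_i - G_i$, which vanishes by construction, while the $\varepsilon^0$-coefficient is $-F_0(\phi_0) = 0$. All coefficients up to and including order $m$ therefore vanish, leaving the claimed $\mathcal{O}(\varepsilon^{m+1})$ remainder.

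The one point that needs care — and the only place where truncation (as opposed to a full formal series) could conceivably cause trouble — is the consistency claim that replacing the infinite series by its degree-$m$ truncation $\phi_{(m)}, r_{(m)}$ does not alter the coefficients of $\varepsilon^0, \ldots, \varepsilon^m$. This holds because each such coefficient depends only on $\phi_j, r_j$ with $j \le i \le m$, and these agree between the truncated and untruncated series; the discarded higher-order terms affect only coefficients of order $> m$, which are absorbed into the remainder. I would make this explicit by observing that $D\phi_{(m)} \cdot r_{(m)}$ and the Taylor coefficients of $F_p(\phi_{(m)})$ are polynomial in the $\phi_j, r_j$ with indices bounded by the $\varepsilon$-order, so no index exceeding $m$ enters below order $m+1$.
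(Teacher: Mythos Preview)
Your proof is correct and follows the same approach as the paper, which simply states that the corollary is ``immediate from Theorem \ref{infconjthm}.'' You have filled in the inductive bookkeeping and the verification of the $\mathcal{O}(\varepsilon^{m+1})$ remainder that the paper leaves implicit, but the underlying argument is identical.
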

 \begin{proof}
 Immediate from Theorem \ref{solveinf}.
  \end{proof}

 \begin{remk} \label{remk:nonunique1}
If $\phi: U_1 \times [0,\varepsilon_0) \to \mathbb{R}^n$ is a smooth family of  embeddings of the  $S_0^\eps$ and
$$h: U_1\times [0, \eps_0) \to U_1 $$
is a parameter family of diffeomorphisms of $U_1$
of the form 
$$(\eta, \eps) \mapsto \xi := h(\eta, \eps)\, , $$
then
 $$\psi = \psi(\eta, \eps) := \phi(h(\eta, \eps), \eps): U_1\times [0,\varepsilon_0)\to \mathbb{R}^n$$ 
 is also an embedding of the 
 $S_0^\eps$. The vector field $r: U_1\times [0,\varepsilon_0)\to \mathbb{R}^k$ for which the conjugacy equation \eqref{conj} holds will then transform. More precisely, if we define the 
  pullback vector field $h^*r:U_1 \times[0,\varepsilon_0)\to \mathbb{R}^k$ by
$$h^*r:= (Dh^{-1} \cdot r) \circ h \, ,$$
then one may  verify that
\begin{align}\label{symmetry}
\mathcal{F}(\phi\circ h, h^*r) = \mathcal{F}(\phi, r) \circ h \, .
\end{align}
This shows that $\mathcal{F}$ possesses a symmetry, its symmetry group being the diffeomorphism group of $U_1$.  In particular, if $\mathcal{F}(\phi, r)=0$ then also $\mathcal{F}(\phi\circ h, h^*r)=0$. Applied to $h={\rm id}_{U_1}+ \varepsilon X$ and 
 expanding  the identity $\mathcal{F}(\phi\circ h, h^*r) = 0$ to first order in $\eps$, then   yields that
$$
D\mathcal{F}(\phi, r) \cdot \left( \begin{array}{c} D\phi \cdot X \\  {\rm [} r, X {\rm ]} \end{array} \right)  = 0\, .
$$
 Here $[r,X] = Dr\cdot X - DX \cdot r$ is the Lie bracket. 
 Applied to $\phi=\phi_0$ and $r=0$ this reduces to 
 $$D\mathcal{F}(\phi_0, 0) \cdot \left( \begin{array}{c} D\phi_0 \cdot X \\  0\end{array} \right)  = 0\, .$$
 This shows very clearly that (and why)  $D\mathcal{F}(\phi_0, 0)$ is not invertible. 
 \end{remk}
 
 \begin{remk}
 On the other hand, Theorem \ref{infconjthm} shows that if we choose $\phi_i$ of the form $\phi_i = D\phi_0\cdot X_i + N_0 \cdot Y_i$, then the solution to the equation $D\mathcal{F}(\phi_0, 0) (\phi_i, r_i) = G_i$ {\it is}  unique after $X_i$ is specified.
 There are certain natural choices for $X_i$ depending on the setting. Suppose for example that the critical manifold admits a parametrisation as a graph of a function, i.e., $\phi_0(\xi) = (\xi, f_0(\xi))$. As we showed in the planar example \ref{eq:parab}, one may then search for a graph style embedding of the slow manifold as well, i.e., of the form
$$\phi_{(m)}(\xi,\varepsilon) = (\xi, f_0(\xi) + \eps f_1(\xi_1) + \cdots + \eps^m f_m(\xi))\, .$$ 
In other words, one requires that $\phi_i(\xi) = (0, f_i(\xi))$. 
For the ansatz $\phi_i = D\phi_0 \cdot X_i + N_0 \cdot Y_i$ from the proof of Theorem \ref{infconjthm} it must then hold that
$$(0, f_i) = \left({\rm id}_{\mathbb{R}^k},  Df_0 \right) \cdot X_i + N_0\cdot Y_i\, .$$
There is thus a simple linear relation between $X_i$ and $Y_i$ of the form $X_i =-p_k(N_0\cdot Y_i)$, where $p_k(x_1, \ldots, x_n)=(x_1, \ldots, x_k)$ is the projection onto the first $k$ coordinates.   

Now recall  from the proof of Theorem \ref{infconjthm} that the solution $(r_i, Y_i)$ of the  $i$-th infinitesimal conjugacy equation is entirely independent of $X_i$. Thus one can first solve for $Y_i$ and choose $X_i:=-p_k(N_0\cdot Y_i)$ afterwards, yielding the desired function $f_i$ for which $\phi_i=(0,f_i)$. 
\end{remk}

\subsection{Computation of the perturbed fast fibre bundle} \label{fibresec}

Here we show that the parametrisation method can also be used to compute an expansion of the perturbed linear fast fibre bundle ${\cal N}S_0^\eps$ of ${\cal N}S_0$. To this end, we augment the ODE $\frac{dx}{dt}=F(x,\varepsilon)$ with its variational equations, i.e., we study the system of ODEs
\begin{align}\label{eq:extended}
\begin{pmatrix}
\frac{dx}{dt}\\
\frac{dv}{dt}
\end{pmatrix}
=
\begin{pmatrix}
F(x,\varepsilon)\\
DF(x,\varepsilon)\cdot v
\end{pmatrix}
=
\begin{pmatrix}
F_0(x) + \varepsilon F_1(x) + \ldots\\
(DF_0(x) + \varepsilon DF_1(x) + \ldots )\cdot v
\end{pmatrix}
=: TF(x,v,\varepsilon)\, .
\end{align}
We first make the following simple observation.
\begin{prop}
The embedding 
$\Phi_0: U_1\times \mathbb{R}^{n-k} \to \mathbb{R}^n \times \mathbb{R}^n$ of ${\cal N}S_0$ defined by 
$$\Phi_0(\xi, w) = (\phi_0(\xi), N_0(\xi)\cdot w)$$
 conjugates the vector field $R_0: U_1\times \mathbb{R}^{n-k} \to \mathbb{R}^k \times \mathbb{R}^{n-k}$  defined by 
$$R_0(\xi, w) := (0, n_0(\xi)\cdot w)$$
to the variational vector field $TF_0$, i.e., 
$$D\Phi_0(\xi, w)\cdot R_0(\xi, w) = TF_0(\Phi_0(\xi, w)) \, .$$ 
\end{prop}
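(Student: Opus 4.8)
The plan is to verify the stated identity by a direct computation of both sides, exploiting the block-triangular structure of $D\Phi_0$ that was already recorded in the proof of Proposition~\ref{properties}{\it ii)} together with the two defining properties of the critical manifold and its fast fibre bundle. Since $\Phi_0(\xi,w) = (\phi_0(\xi), N_0(\xi)\cdot w)$ and $R_0(\xi,w) = (0, n_0(\xi)\cdot w)$, the claim is an equality of elements of $\mathbb{R}^n\times\mathbb{R}^n$, so it suffices to match the two components separately.

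First I would compute the left-hand side. Using
$$
D\Phi_0(\xi, w) = \left( \begin{array}{cc} D\phi_0(\xi) & 0 \\ \partial_{\xi}N_0(\xi)\cdot w & N_0(\xi)\end{array} \right)\, ,
$$
applying this to $R_0(\xi,w) = (0, n_0(\xi)\cdot w)$ kills the first block-column, and one is left with
$$
D\Phi_0(\xi, w)\cdot R_0(\xi, w) = \big(\, 0,\ N_0(\xi)\cdot n_0(\xi)\cdot w\,\big)\, .
$$
The convenient feature here is that the (generally awkward) term $\partial_{\xi}N_0(\xi)\cdot w$ never enters, precisely because the first component of $R_0$ vanishes.

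Next I would compute the right-hand side. Evaluating $TF_0(x,v) = (F_0(x), DF_0(x)\cdot v)$ at the point $(\phi_0(\xi), N_0(\xi)\cdot w)$ gives
$$
TF_0(\Phi_0(\xi, w)) = \big(\, F_0(\phi_0(\xi)),\ DF_0(\phi_0(\xi))\cdot N_0(\xi)\cdot w\,\big)\, .
$$
Now the two key inputs finish the argument: the defining assumption $F_0\circ\phi_0 = 0$ makes the first component vanish, and the linear conjugacy relation \eqref{linearconjugacy}, namely $DF_0(\phi_0(\xi))\cdot N_0(\xi) = N_0(\xi)\cdot n_0(\xi)$, rewrites the second component as $N_0(\xi)\cdot n_0(\xi)\cdot w$. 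Both components then agree with those computed for the left-hand side, which proves the identity for all $(\xi,w)\in U_1\times\mathbb{R}^{n-k}$.

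I do not anticipate any genuine obstacle: the statement is essentially a restatement of the two properties built into the definition of normal non-degeneracy, packaged as a conjugacy of the variational flow. The only point requiring a moment's care is to confirm that the extra entry $\partial_{\xi}N_0(\xi)\cdot w$ in $D\Phi_0$ is indeed annihilated — this is exactly why $R_0$ is chosen to have zero first component, reflecting that the base dynamics of $F_0$ on $S_0$ is trivial (the critical manifold consists of equilibria of the fast vector field).
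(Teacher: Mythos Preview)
Your proposal is correct and matches the paper's own proof essentially line for line: both compute $D\Phi_0(\xi,w)\cdot R_0(\xi,w)$ via the block-triangular form of $D\Phi_0$, obtain $(0,\,N_0(\xi)\cdot n_0(\xi)\cdot w)$, and then identify this with $TF_0(\Phi_0(\xi,w))$ using $F_0\circ\phi_0=0$ and relation~\eqref{linearconjugacy}. Your additional commentary on why the $\partial_\xi N_0(\xi)\cdot w$ entry is harmless is helpful exposition but not a different argument.
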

\begin{proof}
\begin{align} \nonumber 
& D\Phi_0(\xi, w)\cdot R_0(\xi, w) = \left( \begin{array}{cc} D\phi_0(\xi) & 0 \\ \partial_{\xi}N_0(\xi)w & N_0(\xi)\end{array} \right)\cdot \left( \begin{array}{c} 0 \\ n_0(\xi)\cdot w\end{array} \right) = \\ \nonumber
& \left( \begin{array}{c} 0 \\ N_0(\xi) \cdot n_0(\xi)\cdot w\end{array} \right)  = \left( \begin{array}{c} F_0(\phi_0(\xi)) \\ DF_0(\phi_0(\xi)) \cdot N_0(\xi) \cdot w\end{array} \right) = TF_0(\Phi_0(\xi, w))\, .
\end{align}
Note that the penultimate equality follows from equation (\ref{linearconjugacy}).
\end{proof}
\noindent As an immediate consequence, we see that ${\cal N}S_0$ is an invariant manifold for the flow of the variational vector field $TF_0$.
The goal of this section is to find a nearby invariant manifold ${\cal N}S_0^\eps$ for the flow of $TF$ by searching for a parametrisation 
$$\Phi: (\xi, w,\varepsilon) \mapsto (\phi(\xi,\varepsilon), N(\xi,\varepsilon) \cdot w) \ \mbox{from}\ U_1 \times \mathbb{R}^{n-k} \times [0,\varepsilon_0) \ \mbox{to}\ \mathbb{R}^n \times \mathbb{R}^n$$
of the form
$$\Phi(\xi, w,\varepsilon) = (\phi_0(\xi) + \varepsilon \phi_1(\xi) + \ldots, (N_0(\xi)+ \varepsilon N_1(\xi) + \ldots)w)$$
and an accompanying reduced vector field 
$$R: (\xi, w,\varepsilon) \mapsto (r(\xi,\varepsilon), n(\xi,\varepsilon)\cdot w) \ \mbox{from}\ U_1 \times \mathbb{R}^{n-k} \times [0,\varepsilon_0) \ \mbox{to}\ \mathbb{R}^k \times \mathbb{R}^{n-k}\, , $$
of the form
$$R(\xi, w,\varepsilon) = (  \varepsilon r_1(\xi) + \varepsilon^2 r_2(\xi) + \ldots, (n_0(\xi)+ \varepsilon n_1(\xi) + \ldots)w)\, .$$
We want $\Phi$ to send integral curves of $R$ to integral curves of $TF$, so we impose the conjugacy equation
$D\Phi(\xi, w,\eps)\cdot R(\xi, w,\eps) = TF(\Phi(\xi, w,\eps),\varepsilon)$, where $D=D_{(x,w)}$, or equivalently,
\begin{align}\nonumber
\mathcal{G}(\phi, r, N, n)(\xi, \eps) := \left(
\begin{array}{l}
 D\phi(\xi,\varepsilon)\cdot r(\xi,\varepsilon) - F(\phi(\xi,\varepsilon),\varepsilon)  \\ 
  N(\xi,\varepsilon) \cdot n(\xi,\varepsilon) +  \partial_{\xi} N(\xi,\varepsilon) \cdot r(\xi,\varepsilon) - DF(\phi(\xi,\varepsilon),\varepsilon) \cdot N(\xi,\varepsilon) 
\end{array} 
\right) = 0\, .
\end{align}
Note that the first component of $\mathcal{G}$ is simply $\mathcal{F}$, and we have already considered the equation ${\cal F}(\phi, r) = 0$ in the previous sections. We proceed to expand the second component of $\mathcal{G}$ in powers of $\varepsilon$. Collecting terms of the same order, and not writing $\xi$ anymore, we obtain the following recurrent list of equations.
\begin{align}\label{eq:H-recursive} 
\begin{array}{lll}
N_0 \cdot n_0 - DF_0(\phi_0) \cdot N_0  &   & = 0
\\ 
 N_1 \cdot n_0 + N_0 \cdot n_1 - DF_0(\phi_0) \cdot N_1 = &  - \partial_{\xi} N_0 \cdot r_1 + DF_1(\phi_0) \cdot N_0 & \\
 & + \frac{1}{2}D^2F_0(\phi_0)(\phi_1)\cdot N_0 & =: H_1 \\
 \vdots & \vdots & \vdots \\
  N_i \cdot n_0 + N_0\cdot n_i - DF_0(\phi_0) \cdot N_i = &  H_i(\phi_0, \ldots, N_{i-1} ) & =: H_i \\
 \vdots & \vdots & \vdots 
 \end{array}
\end{align}
The first equation is satisfied by assumption, see (\ref{linearconjugacy}). The right-hand sides $H_i=H_i(\xi)$ only depend on the $\phi_j, r_j$ with $0\leq j \leq i$ and the $n_j, N_j$ and $\partial_{\xi} N_j$ with $0\leq j <i$, so we think of it as an inhomogeneous term.
We can iteratively solve these equations if  the operator
$$(N_i, n_i) \mapsto N_i n_0+N_0 n_i  - DF_0(\phi_0) \cdot N_i   \, .$$
is surjective. Again, this operator is far from  injective, so the solutions to the recurrence relations are not unique.  Nevertheless we have the following result, which states that the `variational infinitesimal conjugacy equations' can all be solved.
\begin{thr}\label{solveinflinear}
Consider the augmented system \eqref{eq:extended} under the assumption that the critical manifold $S_0$ is normally hyperbolic.
For any sequence of smooth $M_i: U_1\to  L(\mathbb{R}^{n-k}, \mathbb{R}^{n-k})$, there exist unique sequences of smooth $N_i: U_1\to   L(\mathbb{R}^{n-k}, \mathbb{R}^n)$, $L_i: U_1\to   L(\mathbb{R}^{n-k}, \mathbb{R}^k)$,  and $n_i: U_1\to  L(\mathbb{R}^{n-k}, \mathbb{R}^{n-k})$  which simultaneously satisfy 
$$
N_i(\xi) = N_0(\xi)\cdot M_i(\xi) + D\phi_0(\xi) \cdot L_i(\xi)\, , $$
as well as the infinitesimal variational conjugacy equation
\begin{equation}\label{eq:Hrecursive}
N_i (\xi) \cdot n_0(\xi) + N_0(\xi)\cdot n_i(\xi)  - DF_0(\phi_0(\xi)) \cdot N_i(\xi) = H_i(\xi)\,,
\end{equation}
where the smooth inhomogeneities $H_i: U_1\to L(\mathbb{R}^{n-k}, \mathbb{R}^n)$ are recursively defined in \eqref{eq:H-recursive}.
\end{thr}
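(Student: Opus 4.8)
The plan is to mimic the proof of Theorem~\ref{infconjthm} almost verbatim, now for the matrix-valued unknowns. First I would substitute the ansatz $N_i = N_0\cdot M_i + D\phi_0\cdot L_i$ into the left-hand side of \eqref{eq:Hrecursive} and simplify the term $DF_0(\phi_0)\cdot N_i$ using the two defining relations of normal non-degeneracy, namely $DF_0(\phi_0)\cdot D\phi_0 = 0$ and $DF_0(\phi_0)\cdot N_0 = N_0\cdot n_0$. These give
\[
DF_0(\phi_0)\cdot N_i = DF_0(\phi_0)\cdot N_0\cdot M_i + DF_0(\phi_0)\cdot D\phi_0\cdot L_i = N_0\cdot n_0\cdot M_i\, .
\]
Substituting this together with the ansatz back into \eqref{eq:Hrecursive} and collecting terms, the equation becomes
\[
D\phi_0\cdot (L_i\cdot n_0) + N_0\cdot (M_i\cdot n_0 + n_i - n_0\cdot M_i) = H_i\, ,
\]
which separates cleanly into a component in $\mathrm{im}\,D\phi_0$ and a component in $\mathrm{im}\,N_0$.

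Next I would exploit the splitting $\mathbb{R}^n = \mathrm{im}\,D\phi_0(\xi)\oplus\mathrm{im}\,N_0(\xi)$. Applying the projection $P_0$ onto $\mathrm{im}\,D\phi_0$ along $\mathrm{im}\,N_0$ annihilates the $N_0$-term and leaves $D\phi_0\cdot(L_i\cdot n_0) = P_0\cdot H_i$. Applying the left inverse $(D\phi_0^* D\phi_0)^{-1}D\phi_0^*$ and then using that $n_0$ is invertible (guaranteed by normal non-degeneracy), I obtain the unique solution
\[
L_i = (D\phi_0^* D\phi_0)^{-1}D\phi_0^*\,P_0\,H_i\cdot n_0^{-1}\, .
\]
Applying instead $1-P_0$, which projects onto $\mathrm{im}\,N_0$ and kills the $D\phi_0$-term, yields $N_0\cdot(M_i\cdot n_0 + n_i - n_0\cdot M_i) = (1-P_0)\cdot H_i$. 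Left-multiplying by the pseudo-inverse $(N_0^* N_0)^{-1}N_0^*$ and solving for $n_i$ gives
\[
n_i = (N_0^* N_0)^{-1}N_0^*\,(1-P_0)\,H_i + (n_0\cdot M_i - M_i\cdot n_0)\, ,
\]
again unique and smooth once $M_i$ is prescribed. The free parameter $M_i$ plays exactly the role that $X_i$ played in Theorem~\ref{infconjthm}, and $N_i$ is then recovered from the ansatz.

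The computation is therefore a direct matrix-valued analogue of the earlier argument, and I expect no genuine obstruction. The one structural difference, and the only place where I would have to be slightly careful, is that here the unknowns appear multiplied \emph{on the right} by $n_0$ (and, for $n_i$, inside the commutator $n_0 M_i - M_i n_0$) rather than sitting cleanly as in the vector-field equation of Theorem~\ref{infconjthm}. Consequently the inversion step that produces $L_i$ relies essentially on the invertibility of $n_0$, so it is here that the normal non-degeneracy hypothesis does its work; the equation for $n_i$, by contrast, is solved purely algebraically with no invertibility required. Smoothness of all three outputs follows as before from the smoothness of $\phi_0$, $N_0$, $n_0$ (Proposition~\ref{properties}\,\emph{vii)}), the projection $P_0$, and the inhomogeneity $H_i$.
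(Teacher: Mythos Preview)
Your proposal is correct and follows essentially the same route as the paper: substitute the ansatz, use $DF_0(\phi_0)\cdot D\phi_0=0$ and $DF_0(\phi_0)\cdot N_0=N_0\cdot n_0$ to reduce the equation, then split via $P_0$ and $1-P_0$ and invert with the Moore--Penrose pseudo-inverses and $n_0^{-1}$. Your formulas for $L_i$ and $n_i$ agree with the paper's (the paper writes the $n_i$-formula before simplifying $(N_0^*N_0)^{-1}N_0^*N_0[M_i,n_0]$ to $[M_i,n_0]$, but it is the same expression).
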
 

\begin{proof}
Imposing the special form of $N_i(\xi)$ transforms the $i$-th infinitesimal variational conjugacy equation (\ref{eq:Hrecursive}) into
$$ D\phi_0 \cdot L_i \cdot n_0 + N_0 \cdot n_i  + \underbrace{N_0 \cdot M_i \cdot n_0 - N_0  \cdot n_0 \cdot M_i}_{N_0\cdot [M_i, n_0]} = H_i  \, .$$
Here we did not write $\xi$ and we used that  $DF_0(\phi_0)\cdot D\phi_0 = 0$ and that $DF_0 \cdot N_0 = N_0 \cdot n_0$.
Projecting onto ${\rm im}\, D\phi_0(\xi)$, we obtain $$D\phi_0(\xi) \cdot  L_i(\xi) \cdot  n_0(\xi)  = P_0(\xi)(H_i(\xi)) \, .$$ 
So we find that 
\begin{align}\label{eq:li}
L_i(\xi) &= (D\phi_0(\xi)^* D\phi_0(\xi))^{-1} \cdot D\phi_0(\xi)^* \cdot P_0(\xi)(H_i(\xi)) \cdot n_0(\xi)^{-1} \,
\end{align}
is unique and depends smoothly on $\xi$. 
Similarly, projecting onto ${\rm im}\, N_0(\xi)$, we obtain 
$$ N_0(\xi) \cdot n_i(\xi)    = (1-P_0(\xi))(H_i(\xi)) -  N_0\cdot [ M_i(\xi),  n_0(\xi) ] ) \, .$$  
So also
\begin{align}\label{eq:ni}
n_i(\xi) &= ( N_0(\xi)^*N_0(\xi) )^{-1} N_0(\xi)^* \left((1-P_0(\xi))(H_i(\xi)) -  N_0(\xi)\cdot [ M_i(\xi),  n_0(\xi) ]  \right) \, .
\end{align}
is unique and a smooth function of $\xi$.
\end{proof}

\begin{remk}
The embedding $\Phi$ of the fast fibre bundle is not unique. In fact, we can reparameterise it using any parameter family of diffeomorphisms 
$$H: U_1\times \mathbb{R}^{n-k}\times [0, \eps_0)  \to U_1\times \mathbb{R}^{n-k}$$
of the form 
$$
(\eta, v, \eps) \mapsto (\xi, w) := (h(\eta, \eps), M(\eta, \eps)\cdot v)\,,
$$
with $h$ any smooth family of diffeomorphisms of $U_1$ and $M$ any smooth family of invertible transformations of $\mathbb{R}^{n-k}$. This results in another embedding of ${\cal N}S_0^{\eps}$, given by
$$\Phi\circ H: (\eta, v,\eps)\mapsto (\phi(h(\eta, \eps),\eps),  N(h(\eta, \eps),\eps)\cdot M(\eta, \eps) \cdot v)\,. $$
Analogous to the discussion in Remark \ref{remk:nonunique1}, the terms in the expansion of the embedding $\Phi$ of the perturbed fast fibre bundle become unique once the sequence $\{M_i\}$ is chosen. The choice $M_i = 0$ is typically convenient in practical computations.
\end{remk}
\noindent
The following corollary states that the variational conjugacy equation can be solved to arbitrarily high order.

\begin{cor} 
Let $\phi_0: U_1\to \mathbb{R}^n$ be fixed, and let 
\begin{align} \nonumber 
& X_{(m)} = \varepsilon X_1 + \varepsilon^2 X_2 + \ldots + \varepsilon^m X_m : U_1 \times [0,\eps_0) \to \mathbb{R}^k \\ \nonumber
& M_{(m)} = \varepsilon M_1 + \varepsilon^2 M_2 + \ldots + \varepsilon^m M_m: U_1 \times [0,\eps_0)\to   L(\mathbb{R}^{n-k}, \mathbb{R}^{n-k})
\end{align}
be smooth functions. Then there  are unique smooth functions
\begin{align} \nonumber
& \phi_{(m)} = \phi_0 + \varepsilon \phi_1 + \varepsilon^2 \phi_2 + \ldots + \varepsilon^m \phi_m: U_1 \times [0,\eps_0)\to \mathbb{R}^n\, , \\ \nonumber
& Y_{(m)} = \varepsilon Y_1 + \varepsilon^2 Y_2 + \ldots + \varepsilon^m Y_m: U_1 \times [0,\eps_0)\to \mathbb{R}^{n-k} \, ,\\ \nonumber
& r_{(m)} = \varepsilon r_1 + \varepsilon^2 r_2 + \ldots  + + \varepsilon^m r_m:U_1 \times [0,\eps_0)\to \mathbb{R}^k\,  , \\ \nonumber
& N_{(m)} = N_0 + \varepsilon N_1 +  \varepsilon^2 N_2+  \ldots + \varepsilon^m N_m: U_1 \times [0,\eps_0)\to L(\mathbb{R}^{n-k}, \mathbb{R}^{n}) \, ,   \\ \nonumber
& L_{(m)} = \varepsilon L_1 +  \varepsilon^2 L_2+  \ldots + \varepsilon^m L_m : U_1 \times [0,\eps_0)\to L(\mathbb{R}^{n-k}, \mathbb{R}^{k})\, , \\ \nonumber
& n_{(m)} =  n_0 + \varepsilon n_1 +  \varepsilon^2 n_2+  \ldots + \varepsilon^m n_m :    U_1 \times [0,\eps_0)\to L(\mathbb{R}^{n-k}, \mathbb{R}^{n-k}) \, ,
\end{align}
satisfying 
$$
\phi_{(m)} = \phi_0 + D\phi_0 \cdot X_{(m)} + N_0 \cdot Y_{(m)} \ \mbox{and}\  N_{(m)} = N_0 + N_0\cdot M_{(m)} + D\phi_0 \cdot L_{(m)}$$ 
and
$$\mathcal{G}(\phi_{(m)}, r_{(m)}, N_{(m)}, n_{(m)})(\xi, \eps) = \mathcal{O}(\varepsilon^{m+1})\, .$$
\end{cor}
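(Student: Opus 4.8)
The plan is to derive this corollary directly from Theorems \ref{infconjthm} and \ref{solveinflinear}, exactly as Corollary \ref{maincor1} was derived from Theorem \ref{infconjthm}, by solving the conjugacy equation $\mathcal{G}(\phi, r, N, n) = 0$ order by order in $\varepsilon$. First I would expand $\mathcal{G} = 0$ in powers of $\varepsilon$: its first component reproduces the infinitesimal conjugacy equations \eqref{eq:G-recursive}, while its second component reproduces the infinitesimal variational conjugacy equations \eqref{eq:H-recursive}. The structural fact that drives the argument, already recorded in the excerpt, is the triangular dependence of the inhomogeneities: $G_i$ depends only on $\phi_j, r_j$ with $j < i$, whereas $H_i$ depends on $\phi_j, r_j$ with $j \leq i$ together with $N_j, n_j, \partial_\xi N_j$ for $j < i$.

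This triangular dependence dictates the sequencing of the iteration. For each $i = 1, \ldots, m$ I would first invoke Theorem \ref{infconjthm} with the prescribed $X_i$: since $G_i$ is fully determined by the data of orders $< i$, the theorem produces unique smooth $\phi_i, r_i, Y_i$ with $\phi_i = D\phi_0 \cdot X_i + N_0 \cdot Y_i$ solving the $i$-th equation of \eqref{eq:G-recursive}. With $\phi_i$ and $r_i$ now in hand, the inhomogeneity $H_i$ becomes fully determined, so I would next invoke Theorem \ref{solveinflinear} with the prescribed $M_i$ to obtain unique smooth $N_i, L_i, n_i$ with $N_i = N_0 \cdot M_i + D\phi_0 \cdot L_i$ solving the $i$-th equation of \eqref{eq:H-recursive}. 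Iterating up to $i = m$ and forming the truncated sums $\phi_{(m)}, r_{(m)}, Y_{(m)}, N_{(m)}, L_{(m)}, n_{(m)}$ annihilates every coefficient of $\varepsilon, \varepsilon^2, \ldots, \varepsilon^m$ in $\mathcal{G}$, whence $\mathcal{G}(\phi_{(m)}, r_{(m)}, N_{(m)}, n_{(m)})(\xi, \eps) = \mathcal{O}(\varepsilon^{m+1})$.

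Uniqueness of the entire collection follows from the uniqueness clauses in the two theorems together with an induction on $i$: once $X_{(m)}$ and $M_{(m)}$ are fixed, the order-$i$ step determines $(\phi_i, r_i, Y_i)$ and then $(N_i, n_i, L_i)$ uniquely, and the inputs $G_i$ and $H_i$ feeding these steps are themselves uniquely determined by the earlier orders, which closes the induction. I do not expect a genuine obstacle here, since the two theorems do all the analytic work; the only point requiring care is the interlocking of the two iterations, namely that the base conjugacy at order $i$ must be solved \emph{before} the variational conjugacy at the same order, precisely because $H_i$ draws on the freshly computed $\phi_i$ and $r_i$. Confirming this dependency directly from \eqref{eq:H-recursive} is what guarantees that the combined scheme is well posed at every order.
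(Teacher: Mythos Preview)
Your proposal is correct and takes essentially the same approach as the paper, which simply says ``Immediate from Theorem \ref{solveinflinear}.'' You have merely spelled out in full the iterative structure that the paper takes for granted, including the correct observation that at each order $i$ one must solve the base equation (via Theorem \ref{infconjthm}) before the variational one, since $H_i$ depends on $\phi_i$ and $r_i$.
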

   \begin{proof}
 Immediate from Theorem \ref{solveinflinear}.
  \end{proof}

\begin{ex}
We compute the first-order corrections to the fast fibre bundle and the flow on the fibres for example \eqref{eq:parab}. We first note that

\begin{align*}
DF_0(\phi_0(\xi)) N_0(\xi) = \begin{pmatrix} -4\xi^2 & -2\xi \\ 2\xi(\xi^2-1) &\xi^2 -1  \end{pmatrix} \begin{pmatrix} 2\xi \\  1-\xi^2 \end{pmatrix} 
 = - \begin{pmatrix} 2\xi \\ 1- \xi^2  \end{pmatrix} (1+3\xi^2) = - N_0(\xi) \Delta(\xi)\, ,
\end{align*}
and so we see that 
$$n_0(\xi) = -\Delta(\xi)\, .$$
Using \eqref{eq:H-recursive} we compute
\begin{align*}
H_1(\xi) &= - \partial_{\xi} N_0(\xi)r_1(\xi) + DF_1(\phi_0(\xi)) \cdot N_0(\xi) + \frac{1}{2}D^2F_0(\phi_0(\xi))(\phi_1(\xi),N_0(\xi))\\
&= \begin{pmatrix} \xi^2 -3 - \frac{2}{\Delta(\xi)} \\  \xi\left( \frac{2}{\Delta(\xi)} + \xi -1 \right) - 1 \end{pmatrix}.
\end{align*}
It then follows from \eqref{eq:li} that
\begin{align*}
L_1(\xi) &= (D\phi_0(\xi)^* D\phi_0(\xi))^{-1} \cdot D\phi_0(\xi)^* \cdot P(\xi)(H_1(\xi)) \cdot n_0(\xi)^{-1} \\
&= \frac{5-2\xi + 5\xi^2 - 4\xi^3 - 17 \xi^4 + 6\xi^5 +3\xi^6}{\Delta(\xi)^3}.
\end{align*}
The $\mathcal{O}(\eps)$ term in the expansion of the  fast fibre bundle is given by
$
N_1(\xi) = N_0(\xi)\cdot M_1(\xi) + D\phi_0(\xi) \cdot L_1(\xi)
$, where 
 $M_1(\xi)$ may be chosen freely. Let us choose  $M_1(\xi) \equiv 0$. Using \eqref{eq:ni}, we then have
\begin{align*}
n_1(\xi) &= ( N_0(\xi)^*N_0(\xi) )^{-1} N_0(\xi)^* ((1-P_0(\xi))(H_1(\xi)) \\
&= \frac{-1-9\xi-2\xi^2-19\xi^3+3\xi^4+6\xi^5}{\Delta(\xi)^2}
\end{align*}
Altogether, this shows that the dynamics on the linear fast fibre bundle (i.e., of the original dynamical system augmented by its variational equations) is conjugate to
 \begin{align*}
 \frac{d\xi}{dt} & = r(\xi, \eps) =   \eps \left(\frac{2}{\Delta(\xi)} \right) + \eps^2 \left( \frac{6\xi(9\xi^2 + 9\xi^4-2)}{\Delta(\xi)^4}\right) + \cdots \\ 
 \frac{dw}{dt} & = n(\xi, \eps) w = \left(  -\Delta(\xi) + \eps \left( \frac{-1-9\xi-2\xi^2-19\xi^3+3\xi^4+6\xi^5}{\Delta(\xi)^2}  \right) +\cdots \right)w.\, 
 \end{align*}
This concludes our example.
\end{ex}

\section{Multiple timescale problems} \label{secmultiple}
Inspired by the geometric framework of the parametrisation method, we now introduce a new,   geometric characterisation of a multiple timescale system, using a sequence of nested embedded slow manifolds. This new characterisation can be viewed as an extension of the standard form \eqref{eq:cardin1}; in particular, it is broad enough to include systems with `hidden' dynamical timescales, such as  example \eqref{babyreaction} in the introduction. 

 \begin{defi} \label{def:mult}
Let $\eps_0>0$ and $U_0 \subset \mathbb{R}^n$ a nonempty open subset. Consider a smooth one-parameter family of ODEs $x' = F(x,\eps)$ with $F: U_0 \times [0,\eps_0) \to \mathbb{R}^n$.  Let $m \geq 2$ be an integer. 

\indent We say that $F$ defines an {\it $m$-timescale dynamical system} if there exist integer sequences $n = n_0 > n_1 > n_2 > \ldots > n_{m-1}>0$ and $0=j_0 < j_1 < j_2 < \ldots < j_{m-1}$, such that 
\begin{itemize}
    \item[{\it i)}] for every $0\leq i \leq m-1$ there is a smooth one-parameter family of vector fields $$r^{(i)}: U_i\times [0,\eps_0) \to \mathbb{R}^{n_i}$$ defined on a nonempty open subset $U_i\subset \mathbb{R}^{n_i}$, and admitting an asymptotic expansion 
$$r^{(i)}(\xi, \eps) = \eps^{j_i}r^{(i)}_{j_i}(\xi) + \mathcal{O}(\varepsilon^{j_i+1}) \, ;$$
    We have $r^{(0)} = F$, and thus $r^{(0)}_0 = F_0$;
\item[{\it ii)}] 
for every $0\leq i \leq m-1$ the critical manifold 
$$S_i := \{ \xi \in U_{i} \, |\, r^{(i)}_{j_i}(\xi)=0 \} \subset U_i$$ 
of $r^{(i)}$ has dimension $n_{i+1}$ and is normally hyperbolic (with respect to  $Dr^{(i)}_{j_i}$, see Def. \ref{defnondegenerate});

    \item[{\it iii)}] for every $0\leq i \leq m-2$, there is  a smooth one-parameter family of embeddings $$\phi^{(i)}: U_{i+1}\times [0,\varepsilon_0) \to U_i \, $$
    admitting an asymptotic expansion
    $$\phi^{(i)}(\xi, \eps) = \phi^{(i)}_0(\xi) + \mathcal{O}(\eps)\, .$$
It holds that
$$ S_i = \phi^{(i)}_0(U_{i+1})\, ;$$
\item[{\it iv)}] for each $0\leq i \leq m-2$ and each $0\leq \eps < \eps_0$ the conjugacy equation
$$D\phi^{(i)}\cdot  r^{(i+1)} = r^{(i)}\circ \phi^{(i)}$$
holds. Here, $D = D_{\xi}$ denotes the derivative with respect to the first variable. We write $$S_i^{\eps} := \phi^{(i)}(U_{i+1}, \eps) \subset U_i$$ for the corresponding (infra-)slow manifold.
\end{itemize}
 \end{defi}
 \noindent The $n_{i}$-dimensional manifold $S_{i-1}^{\eps} :=\phi^{(i)}(U_i, \eps)$ can be interpreted as the $i$-th slow manifold of the multiple timescale system.  Note that we have introduced superscripts ${}^{(i)}$ to denote objects related to $S_{i-1}^{\eps}$. The integers $n_i$
   decrease, expressing that the slow, infra-slow, etc. manifolds decrease in dimension. The flow on $S^{\eps}_{i-1}$ is governed by the vector field $r^{(i)}$ on $U_{i}$, and takes place on a timescale $t_i = \varepsilon^{j_i} t$. The integers $j_i$ increase, expressing that the timescales become slower as we descend along the chain of nested slow manifolds. 
\begin{remk}
The dimension $n_{i}$ of $S_{i-1}$ is generally unrelated to the leading order $j_i$ of the vector field $r^{(i)}$ on it. This explains the need for two separate lists of integers $n_i$ and $j_i$. 
\end{remk}

\begin{remk} \label{remk:commuten}

In analogy to Remark \ref{remk:commute2}, the conjugacy relations in Def. \ref{def:mult} can be expressed compactly by a commutative diagram:
$$
 \begin{tikzcd}
U_{m-1} \arrow[r,"\phi^{(m-2)}"] \arrow[d,"\text{id}_{U_{m-1}}\times r^{(m-1)}"]  &  U_{m-2} \arrow[r," "] \arrow[d,"\text{id}_{U_{m-2}}\times r^{(m-2)}"]  &  ~\cdots ~  \arrow[r," "] \arrow[d," "]  &  U_1 \arrow[r,"\phi^{(0)}"] \arrow[d,"\text{id}_{U_1}\times r^{(1)}"]  &  U_0 \arrow[d,"\text{id}_{U_0} \times r^{(0)}"]  \\
U_{m-1}\times \mathbb{R}^{n_{m-1}} \arrow[r,"T\phi^{(m-2)}"']  &  U_{m-2} \times \mathbb{R}^{n_{m-2}} \arrow[r," "'] & ~\cdots~  \arrow[r," "']  & U_1\times \mathbb{R}^{n_1} \arrow[r,"T\phi^{(0)}"']  &  U_0 \times \mathbb{R}^{n_0}
 \end{tikzcd}
$$
Each square depicts a conjugacy linking the dynamics on an invariant submanifold to dynamics on a corresponding chart. Moving from right to left in the diagram, we descend the chain of fast, slow, infra-slow, etc. manifolds.
\end{remk}

 

\subsection{Hidden timescales} \label{sec:hidden}

 The surprising appearance of {\it hidden} timescales, as observed in the introductory example \eqref{babyreaction}, can be explained by the presence of terms other than $F_i$ in the inhomogeneous term $G_i$ of the infinitesimal conjugacy equations \eqref{eq:G-recursive}. In this section we will explore this phenomenon for systems with three timescales. 
 
 Here, we assume that $F(x, \eps) = F_0(x)+ \eps F_1(x) + \eps^2 F_2(x) + \mathcal{O}(\eps^3)$ is a three-timescale vector field on $U_0\subset\mathbb{R}^n$, with a critical manifold $S_0 = \{x\in U_0\, |\, F_0(x)=0\} = \phi_0^{(0)}(U_1)\subset \mathbb{R}^n$, reduced slow vector field 
 $$r^{(1)}(\xi, \eps) = \eps r^{(1)}_1(\xi) + \eps^2 r^{(1)}_2(\xi) + \mathcal{O}(\eps^3)\ \mbox{on}\ U_1\subset \mathbb{R}^{n_1}\, ,$$ 
 infra-critical manifold $S_1 = \{\xi \in U_1 \, |\,  r_1^{(1)}(\xi) = 0\} = \phi_0^{(1)}(U_2)$ and  reduced infra-slow vector field 
 $$
 r^{(2)}(\eta, \eps) = \eps^2 r^{(2)}_2(\eta) + \mathcal{O}(\eps^3) \ \mbox{on}\ U_2 \subset \mathbb{R}^{n_2}\, .
 $$
 
 \begin{prop}
 The leading order term $r_2^{(2)}: U_2\to \mathbb{R}^{n_{2}}$   is given by
 \begin{align}\label{formular22}
 r_2^{(2)}(\eta) = \tilde P_{0}^{(1)}(\eta) \tilde P_{0}^{(0)}(\xi) G_2(\xi) \ \mbox{with}\ \xi=\phi_0^{(1)}(\eta)\, . 
 \end{align}
 Here, $G_2(\xi)$ is as defined in \eqref{eq:G-recursive}, and the 
    {\it auxiliary projections} $\tilde{P}_0^{(0,1)}$ are defined by
\begin{equation}\label{eq:tildeP}
\begin{aligned} 
\tilde{P}_0^{(0)}(\xi) &= (D\phi^{(0)}_0(\xi)^* D\phi_0^{(0)}(\xi))^{-1} D\phi^{(0)}_0(\xi)^* P_0^{(0)}(\xi)\, , \\
\tilde{P}_0^{(1)}(\eta) &= (D\phi^{(1)}_0(\eta)^* D\phi_0^{(1)}(\eta))^{-1} D\phi^{(1)}_0(\eta)^* P_0^{(1)}(\eta)\, ,
\end{aligned}
\end{equation}
where $P_0^{(0)}$ denotes the projection onto the tangent space of $S_0$ along the fast fibre bundle of $DF_0$ and $P_0^{(1)}$ denotes the projection onto the tangent space of $S_1$ along the fast fibre bundle of $Dr_1^{(1)}$.  
\end{prop}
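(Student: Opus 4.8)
The plan is to reduce the three-timescale computation to two successive applications of the two-timescale formula \eqref{eq:ri}. First I would record what Theorem \ref{infconjthm} already tells us about the slow field $r^{(1)}$ on $U_1$. Comparing the definition \eqref{eq:tildeP} of $\tilde P_0^{(0)}$ with formula \eqref{eq:ri}, every coefficient of $r^{(1)}$ is obtained by applying $\tilde P_0^{(0)}$ to the corresponding inhomogeneity; in particular $r^{(1)}_1(\xi) = \tilde P_0^{(0)}(\xi)\,G_1(\xi)$ and, crucially for what follows,
\[
r^{(1)}_2(\xi) = \tilde P_0^{(0)}(\xi)\,G_2(\xi)\,,
\]
with $G_2$ as in \eqref{eq:G-recursive}.

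The key observation is that the slow equation $\dot\xi = r^{(1)}(\xi,\eps) = \eps\, r_1^{(1)}(\xi) + \eps^2 r_2^{(1)}(\xi) + \mathcal{O}(\eps^3)$ on $U_1$ is itself a singular perturbation problem, after rescaling to the slow time $t_1 = \eps t$. Dividing by $\eps$ gives
\[
\frac{d\xi}{dt_1} = \hat F(\xi,\eps) := r_1^{(1)}(\xi) + \eps\, r_2^{(1)}(\xi) + \mathcal{O}(\eps^2)\,,
\]
which is exactly of the form \eqref{eq:singpert}, with leading term $\hat F_0 = r_1^{(1)}$ and next term $\hat F_1 = r_2^{(1)}$. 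Its critical manifold is $\{r_1^{(1)} = 0\} = S_1 = \phi_0^{(1)}(U_2)$, which is normally hyperbolic with respect to $D r_1^{(1)} = D\hat F_0$ by Definition \ref{def:mult}(ii). Hence Theorem \ref{infconjthm} applies verbatim to $\hat F$, with the projection $P_0^{(1)}$ onto $TS_1$ along the fast fibre bundle of $Dr_1^{(1)}$ and the left-inverse built from $\phi_0^{(1)}$, i.e.\ with the operator $\tilde P_0^{(1)}$ of \eqref{eq:tildeP}.

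Applying \eqref{eq:ri} to $\hat F$, the leading-order reduced field $\hat r_1$ on $U_2$ is $\hat r_1(\eta) = \tilde P_0^{(1)}(\eta)\,\hat G_1(\eta)$, where the first inhomogeneity is $\hat G_1(\eta) = \hat F_1(\phi_0^{(1)}(\eta)) = r_2^{(1)}(\phi_0^{(1)}(\eta))$. Undoing the rescaling $t_1 = \eps t$ multiplies the reduced field by $\eps$, so $r^{(2)}(\eta,\eps) = \eps\,\hat r(\eta,\eps)$ and therefore $r_2^{(2)} = \hat r_1$; this is consistent with the order count $j_2 = 2$ in Definition \ref{def:mult}. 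Substituting the formula for $r_2^{(1)}$ from the first paragraph, evaluated at $\xi = \phi_0^{(1)}(\eta)$, yields
\[
r_2^{(2)}(\eta) = \tilde P_0^{(1)}(\eta)\,r_2^{(1)}(\phi_0^{(1)}(\eta)) = \tilde P_0^{(1)}(\eta)\,\tilde P_0^{(0)}(\xi)\,G_2(\xi)\,,\qquad \xi = \phi_0^{(1)}(\eta)\,,
\]
which is precisely \eqref{formular22}.

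The only genuinely delicate point is the $\eps$-bookkeeping across the time rescaling: one must check that treating $r^{(1)}$, whose expansion starts at order $\eps$, as a fresh perturbation problem reproduces the coefficient of $\eps^2$ in $r^{(2)}$ rather than of $\eps$. This is settled by the factor-of-$\eps$ relation $r^{(2)} = \eps\,\hat r$ above; everything else is a direct substitution of the single formula \eqref{eq:ri} at two levels of the nested construction. I would also note in passing that the dimensions compose correctly, $\tilde P_0^{(0)}: \mathbb{R}^n \to \mathbb{R}^{n_1}$ and $\tilde P_0^{(1)}: \mathbb{R}^{n_1}\to \mathbb{R}^{n_2}$, so the right-hand side of \eqref{formular22} indeed lands in $\mathbb{R}^{n_2}$.
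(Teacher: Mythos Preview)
Your proof is correct and follows essentially the same approach as the paper: both arguments first extract $r_2^{(1)}(\xi) = \tilde P_0^{(0)}(\xi)\,G_2(\xi)$ from formula \eqref{eq:ri}, then rescale time by $\eps$ to recast the slow equation on $U_1$ as a fresh singular perturbation problem and apply \eqref{eq:ri} once more to obtain $r_2^{(2)}(\eta) = \tilde P_0^{(1)}(\eta)\,r_2^{(1)}(\phi_0^{(1)}(\eta))$. Your discussion of the $\eps$-bookkeeping is somewhat more explicit than the paper's, but the logical structure is identical.
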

 
 \begin{proof}
Projection of the second equation in \eqref{eq:G-recursive} gives  
 \begin{align}\nonumber 
 & D\phi^{(0)}_0(\xi)\cdot r_2^{(1)}(\xi) = P_0^{(0)}(\xi)\cdot G_2(\xi)\, . 
\end{align}
Formula \eqref{eq:ri} shows that the solution to this equation reads
\begin{align}
\label{first}
 r_2^{(1)}(\xi) = \tilde P_0^{(0)}(\xi) G_2(\xi)\, .
\end{align}
Similarly, the first equation in \eqref{eq:G-recursive} with $F$ replaced by $\eps^{-1}r^{(1)}$, $r^{(1)}$ by $\eps^{-1} r^{(2)}$ and $\phi$ by $\phi^{(1)}$ implies that 
 \begin{align}\nonumber
  D\phi^{1}_0(\eta)\cdot r_2^{(2)}(\eta) = P_0^{(1)}(\eta)\cdot G_1(\eta)\, ,
 \end{align}
 with $G_1(\eta) = r_1^{(2)}(\phi_0^{(1)}(\eta))$. The solution to this equation reads
 \begin{align}
     \label{second}
   r_2^{(2)}(\eta) = \tilde P_0^{(1)}(\eta) G_1(\eta) =  \tilde P_0^{(1)}(\eta) r_1^{(2)}(\xi)\ \mbox{with} \ \xi = \phi_0^{(1)}(\eta)\, .
 \end{align}
 Together, \eqref{first} and \eqref{second} give formula \eqref{formular22}.
 \end{proof}
 \noindent It was shown in \eqref{eq:G-recursive} that the inhomogeneous term $G_2  = G_2(\xi)$ is given by the sum
\begin{equation}\label{eq:G2}
G_2  = -D\phi^{(0)}_1 r^{(1)}_1 + F_2(\phi^{(0)}_0) +DF_1(\phi^{(0)}_0)\cdot\phi^{(0)}_1 + \frac{1}{2}D^2 F_0(\phi^{(0)}_0)(\phi^{(0)}_1,\phi^{(0)}_1)\, .
\end{equation}
Note that the term $-D\phi^{(0)}_1 r^{(1)}_1$ does not contribute to the expression for $r_2^{(2)}$ in formula \eqref{formular22} because $r^{(1)}_1(\xi) = 0$ for $\xi = \phi_0^{(1)}(\eta) \in S_1$ by definition of $S_1$. The contribution of the term $F_2(\phi^{(0)}_0)$ is expected; a nontrivial contribution to $r_2^{(2)}$ from this term is not ``hidden.'' The two remaining terms in \eqref{eq:G2} are more surprising. It turns out that both can contribute nontrivially to $r_2^{(2)}$, i.e. to an infra-slow flow on the timescale $t_2=\eps^2 t$.  We shall demonstrate this using two simple examples of the form ${x}' = F_0(x) + \eps F_1(x)$ (i.e. setting $F_2 \equiv 0$).\\

\begin{ex}[{\it Nontrivial contribution of $DF_1(\phi^{(0)}_0)\cdot\phi^{(0)}_1$}]
We revisit the IFFL model \eqref{babyreaction} in the introduction, this time applying the parametrisation method to pinpoint which term(s) in \eqref{eq:G2} contribute to the infra-slow flow. The critical manifold
\begin{align*}
S_0 &= \{(x_1,x_2,x_3): x_3 = 0\}
\end{align*}
admits a smooth parametrisation $\phi^{(0)}_0: U_1:= \mathbb{R}^2 \to U_0 :=\mathbb{R}^3$ defined by 
\begin{align*}
\phi^{(0)}_0(\xi_1,\xi_2) &= \begin{pmatrix} \xi_1 \\ \xi_2 \\ 0 \end{pmatrix}.
\end{align*}
We also record the factorisation of the leading-order part of the vector field, of the form \eqref{eq:splitting}:
\begin{align*}
F_0(x_1,x_2,x_3) &= \begin{pmatrix} 0 \\ 0 \\ -x_{3} \end{pmatrix} = \begin{pmatrix} 0 \\ 0 \\ -1 \end{pmatrix} x_3,
\end{align*}
implying that
\begin{align*}
N_0^{(0)}(\xi_1,\xi_2) &= \begin{pmatrix} 0 \\ 0 \\ -1 \end{pmatrix}
\end{align*}
spans the fast fibres with basepoints in $S_0$ (see Remark \ref{remk:Nfsplitting2}).

Let us proceed to compute $r^{(1)}_1$ and $\phi^{(0)}_1$ using the parametrisation method. For simplicity we choose the graph ansatz
\begin{align*}
\phi^{(0)}(\xi_1,\xi_2, \eps) &= \begin{pmatrix} \xi_1 \\ \xi_2 \\ 0 \end{pmatrix} + \eps \begin{pmatrix} 0 \\ 0 \\ f_1(\xi_1,\xi_2)  \end{pmatrix} + \eps^2 \begin{pmatrix} 0 \\ 0 \\ f_2(\xi_1,\xi_2) \end{pmatrix} + \cdots.
\end{align*}
Writing $r^{(1)}_1 = (r^{(1)}_{1,1}, r^{(1)}_{1,2})$ and realising that $\phi^{(0)}_1 = (0, 0, f_1)$, the first infinitesimal conjugacy equation 
$D\phi^{(0)}_0 \cdot r^{(1)}_1 - DF_0(\phi_0^{(0)})\cdot \phi_1^{(0)}= F_1(\phi_0)$ reduces to the equation
\begin{align*}
\begin{pmatrix}
1 & 0 \\ 0 & 1 \\ 0 & 0 
\end{pmatrix}
\begin{pmatrix}
r_{1,1}^{(1)}(\xi) \\   r_{1,2}^{(1)}(\xi)
\end{pmatrix} 
- \begin{pmatrix}
0 & 0 & 0\\ 0 & 0 & 0 \\ 0 & 0 & -1
\end{pmatrix}
\begin{pmatrix}
0 \\  0 \\ f_1(\xi)
\end{pmatrix} = 
    \begin{pmatrix} r^{(1)}_{1,1}(\xi) \\ r^{(1)}_{1,2}(\xi) \\ f_1(\xi) \end{pmatrix} &= \begin{pmatrix} -a_2 \xi_1 \xi_2 \\ -a_4 \xi_2 \\ 1 \end{pmatrix}.
\end{align*}
Thus,
\begin{align*}
\phi^{(0)}(\xi_1,\xi_2, \eps) = \phi^{(0)}_0(\xi_1,\xi_2) + \eps \phi^{(0)}_1(\xi_1,\xi_2) + \mathcal{O}(\eps^2) &= \begin{pmatrix}  \xi_1 \\ \xi_2 \\ 0\end{pmatrix} + \eps\begin{pmatrix} 0 \\ 0 \\ 1 \end{pmatrix} + \mathcal{O}(\eps^2)
\end{align*}
and
\begin{align*}
r^{(1)}(\xi, \eps) =  \eps \begin{pmatrix} -a_2 \xi_1 \xi_2 \\ -a_4 \xi_2 \end{pmatrix} + \mathcal{O}(\eps^2) = \eps  \begin{pmatrix} -a_2 \xi_1 \\ -a_4 \end{pmatrix} \xi_2 + \mathcal{O}(\eps^2)\, .
\end{align*}
This agrees with the computation of the slow vector field in \eqref{baby-slow}. Using Remark \ref{remk:Nfsplitting} again, we read off the existence of a further critical manifold
\begin{align*}
S_1 &= \{(\xi_1,\xi_2): \xi_2 = 0\},
\end{align*}
admitting a smooth parametrisation $\phi_0^{(1)}: U_2 := \mathbb{R} \to U_1$:
\begin{align*}
\phi^{(1)}_0(\eta) &= \begin{pmatrix} \eta \\ 0 \end{pmatrix}.
\end{align*}
Using Remark \ref{remk:Nfsplitting2} again, we note that the fast fibres along $S_1$ are spanned by 
\begin{align*}
N^{(1)}_0(\eta) &= \begin{pmatrix} -a_2 \eta \\ -a_4 \end{pmatrix}.
\end{align*}
We now study the terms in $G_2$ using \eqref{eq:G2}. Clearly $-D\phi^{(0)}_1 r^{(0)}_1 = 0 $ and $F_2(\phi^{(0)}_0) = 0$ on $S_1$. The remaining terms are
\begin{align*}
DF_1(\phi^{(0)}_0)\cdot \phi^{(0)}_1 &= \begin{pmatrix} -a_2 \xi_2 & -a_2 \xi_1 & a_1 \\ 0 & -a_4 & a_3 \\ 0 & 0 & 0  \end{pmatrix} \begin{pmatrix} 0 \\ 0 \\ 1 \end{pmatrix} = \begin{pmatrix} a_1 \\ a_3 \\ 0 \end{pmatrix}\ \mbox{and}  \ 
\frac{1}{2} D^2F_0(\phi^{(0)}_0)(\phi^{(0)}_1,\phi^{(0)}_1) = 0\, .
\end{align*}
 Note the nontrivial contribution of $DF_1(\phi^{(0)}_0)\cdot \phi^{(0)}_1$. It remains to compute $r_2^{(2)}$, using \eqref{formular22}. Using the expressions we found for $D\phi^{(0)}_0,\,D\phi^{(1)}_0,\,N^{(0)}_0,$ and $N^{(1)}_0$, one computes that the auxiliary projection operators -- see \eqref{eq:tildeP} -- are given by
 \begin{align*}
\tilde{P}_0^{(0)}(\xi) &= \begin{pmatrix} 1 & 0 & 0 \\ 0 & 1 & 0 \end{pmatrix}\, ,\\
\tilde{P}_0^{(1)}(\eta) &= \begin{pmatrix} 1 & -\dfrac{a_2}{a_3}\eta  \end{pmatrix}.
\end{align*}
Denoting $\xi=\phi_0^{(1)}(\eta)$, the leading-order term of the infra-slow vector field on the chart $U_2$ is
\begin{align*}
r_2^{(2)}(\eta) &= \tilde{P}_0^{(1)}(\eta)\tilde{P}_0^{(0)}(\xi) G_2(\xi) \\
&= a_1 - \frac{a_2 a_3}{ a_4} \eta, 
\end{align*}
i.e. the asymptotic expansion of the infra-slow vector field on $U_2$  is
\begin{align*}
r^{(2)}(\eta, \eps) = \left( a_1 - \frac{a_2 a_3}{ a_4} \eta \right) \eps^2 +\mathcal{O}(\eps^3)\, .
\end{align*}
This result agrees with that in \eqref{eq:babyinfra-slow}. It is not hard to show that $r^{(2)}_{i} =0$ for $i=3, 4, \ldots$, in agreement with the exact expression for $r^{(2)}$ given in the introduction. From the leading-order part of the infra-slow vector field we also recover the equilibrium point
\begin{align*}
\eta^* &= \frac{a_1 a_4}{a_2 a_3}\, .
\end{align*}
Note that $(\phi^{(0)}_0\circ \phi^{(1)}_0)(\eta^*) = (\eta^*, 0, 0)$ is $\mathcal{O}(\eps)$ away from $\left( \frac{a_1 a_4}{a_2 a_3}, \eps \frac{a_3}{a_4}, \eps \right)$, the true equilibrium point of the original system.
\end{ex}

 \begin{ex}[{\it Nontrivial contribution of $\frac{1}{2}D^2 F_0(\phi^{(0)}_0)(\phi^{(0)}_1,\phi^{(0)}_1)$}]
Consider the system
\begin{align} \label{eq:d2f}
\left( \begin{array}{c}  x_1'\\  x_2'\\   x_3' \end{array} \right) &=  F_0 + \varepsilon F_1 = 
\left( \begin{array}{c} x_3^2 \\ 0 \\ -x_3 \end{array} \right)  + \varepsilon   \left( \begin{array}{c} 0 \\ -x_2 \\ 1 \end{array}     \right)
\end{align}
with $0 \leq \eps \ll 1$. 
Like the IFFL model, this system is exactly solvable: it has an attracting invariant slow manifold $S^{\eps}_0=\{x_3 = \varepsilon\}$, on which the dynamics for $(x_1,x_2)$ is governed by
\begin{align*}
\begin{pmatrix} x_1' \\ x_2' \end{pmatrix} &= \eps \begin{pmatrix} 0 \\ -x_2 \end{pmatrix} + \eps^2 \begin{pmatrix} 1 \\ 0 \end{pmatrix}\, .
\end{align*}
This slow vector field in turn has an attracting one-dimensional critical manifold given by $S_1=\{x_2=0\}$, on which the dynamics for $x_1$ is governed by
$$x_1' = \varepsilon^2.  $$
As in the previous example, we now show how the parametrisation method recovers the same slow and infra-slow dynamics. First, note that the critical manifold of the system is given by
 \begin{align*}
S_0 &= \{(x_1,x_2,x_3): x_3 = 0\},
\end{align*}
admitting a smooth parametrisation  $\phi^{(0)}_0: U_1:=\mathbb{R}^2 \to U_0:=\mathbb{R}^3$:
\begin{align*}
\phi^{(0)}_0(\xi_1,\xi_2) &= \begin{pmatrix} \xi_1 \\ \xi_2 \\ 0 \end{pmatrix}.
\end{align*}
We have furthermore that
\begin{align*}
N_0^{(0)}(\xi_1,\xi_2) &= \begin{pmatrix} 0 \\ 0 \\ -1 \end{pmatrix}
\end{align*}
spans the fast fibres with basepoints in $S_0$ (by Remark \ref{remk:Nfsplitting2}). As before, we choose the graph ansatz
\begin{align*}
\phi^{(0)}(\xi_1,\xi_2, \eps) &= \begin{pmatrix} \xi_1 \\ \xi_2 \\ 0 \end{pmatrix} + \eps \begin{pmatrix} 0 \\ 0 \\ f_1(\xi_1,\xi_2)  \end{pmatrix} + \eps^2 \begin{pmatrix} 0 \\ 0 \\ f_2(\xi_1,\xi_2) \end{pmatrix} + \cdots.
\end{align*}
Writing again $r^{(1)}_1 = (r^{(1)}_{1,1}, r^{(1)}_{1,2})$ and realising that $\phi^{(0)}_1 = (0, 0, f_1)$, the first infinitesimal conjugacy equation 
 reduces to  
 \begin{align*}
\begin{pmatrix}
1 & 0 \\ 0 & 1 \\ 0 & 0 
\end{pmatrix}
\begin{pmatrix}
r_{1,1}^{(1)}(\xi) \\   r_{1,2}^{(1)}(\xi)
\end{pmatrix} 
- \begin{pmatrix}
0 & 0 & 0\\ 0 & 0 & 0 \\ 0 & 0 & -1
\end{pmatrix}
\begin{pmatrix}
0 \\  0 \\ f_1(\xi)
\end{pmatrix} = 
    \begin{pmatrix} r^{(1)}_{1,1}(\xi) \\ r^{(1)}_{1,2}(\xi) \\ f_1(\xi) \end{pmatrix} &= \begin{pmatrix}  0 \\ - \xi_2  \\ 1 \end{pmatrix}.
\end{align*}
Thus,
\begin{align*}
\phi^{(0)}(\xi_1,\xi_2, \eps) = \phi^{(0)}_0(\xi_1,\xi_2) + \eps \phi^{(0)}_1(\xi_1,\xi_2) + \mathcal{O}(\eps^2) &= \begin{pmatrix}  \xi_1 \\ \xi_2 \\ 0\end{pmatrix} + \eps\begin{pmatrix} 0 \\ 0 \\ 1 \end{pmatrix} + \mathcal{O}(\eps^2)\, ,
\end{align*}
and
\begin{align*}
r^{(1)}(\xi, \eps) &= \eps \begin{pmatrix}  0 \\ 1 \end{pmatrix}(-\xi_2) + \mathcal{O}(\eps^2)\, .
\end{align*}
We now read off a further critical manifold from the factorisation of the leading-order part of this vector field:
\begin{align*}
S_1 &= \{(\xi_1,\xi_2): \xi_2 = 0\}.
\end{align*}
The manifold $S_1$ admits a smooth parametrisation $\phi^{(1)}_0: U_2:=\mathbb{R} \to U_1$:
\begin{align*}
\phi^{(1)}_0(\eta) &= \begin{pmatrix} \eta \\ 0 \end{pmatrix}.
\end{align*}
Recalling Remark \ref{remk:Nfsplitting2} again, we note that the corresponding fast fibres along $S_1$ are spanned by 
\begin{align*}
N^{(1)}_0(\eta) &= \begin{pmatrix} 0 \\ 1 \end{pmatrix}.
\end{align*}
We proceed to study the terms in $G_2$ using \eqref{eq:G2}. Clearly $-D\phi^{(0)}_1 r^{(0)}_1 = 0 $ and $F_2(\phi^{(0)}_0) = 0$ on $S_1$. The remaining terms are
\begin{align*}
DF_1(\phi^{(0)}_0)\cdot \phi^{(0)}_1 &= 
\begin{pmatrix} 
0 & 0 & 0 \\
0 & -1 & 0\\
0 & 0 & 0  \end{pmatrix}
 \begin{pmatrix} 0 \\ 0 \\ 1 \end{pmatrix}  = \begin{pmatrix} 0 \\ 0 \\ 0 \end{pmatrix} \ \mbox{and}\\
\frac{1}{2}D^2F_0(\phi^{(0)}_0)(\phi^{(0)}_1,\phi^{(0)}_1) &=\frac{1}{2} \begin{pmatrix}  (\phi^{(0)}_1)^* HF_{0,1}  \phi^{(0)}_1\\  (\phi^{(0)}_1)^* HF_{0,2}  \phi^{(0)}_1 \\  (\phi^{(0)}_1)^* HF_{0,3}  \phi^{(0)}_1\end{pmatrix}
 = \frac{1}{2} \begin{pmatrix} 2 \\ 0 \\ 0  \end{pmatrix} = \begin{pmatrix} 1 \\ 0 \\ 0 \end{pmatrix},
\end{align*}
 where $F_0 = (F_{0,1},F_{0,2},F_{0,3})$ and $HF_{0,i}$ denotes the Hessian matrix of $F_{0,i}$. We highlight that the only nontrivial contributions to $G_2$ now come from  second-derivatives of $F_0$.
 
  It remains to compute $r_2^{(2)}$, using \eqref{formular22}. Using the expressions that we found for $D\phi^{(0)}_0,\,D\phi^{(1)}_0,\,N^{(0)}_0,$ and $N^{(1)}_0$, we compute that the auxiliary projection operators -- see \eqref{eq:tildeP} -- are given by
 \begin{align*}
\tilde{P}_0^{(0)}(\xi) &= \begin{pmatrix} 1 & 0 & 0 \\ 0 & 1 & 0 \end{pmatrix}\, ,\\
\tilde{P}_0^{(1)}(\eta) &= \begin{pmatrix} 1 & 0  \end{pmatrix}\, ,
\end{align*}
and thus,
\begin{align*}
r_2^{(2)}(\eta) &= \tilde{P}_0^{(1)}(\eta)\tilde{P}_0^{(0)}(\xi) G_2(\xi) = 1 \, , 
\end{align*}
which implies that
\begin{align*}
r^{(2)}(\eta, \eps) =  \eps^2 + \mathcal{O}(\eps^3)\, .
\end{align*}
Again, it is not hard to show that $r^{(2)}_{i} =0$ for $i=3,4, \ldots$, in accordance with the exact evolution equation $x_2'=\eps^2$ derived above.
  \end{ex}
  \begin{remk}
In both examples of this section, it is useful to interpret $x_3$ as a quasi-forcing term, which introduces an extra timescale into the  system. In fact, more than one extra timescale can arise from one forcing term. Consider the simple system
\begin{align}
x_1'  =& -x_n^{n-1} \nonumber \\
\cdots & \nonumber\\
x_i' =& -x_n^{n-i} x_i\nonumber \text{ \hspace{0.3cm} for } i = 2,\cdots,n-1 
\\
\cdots &\nonumber \\
x_n' =& -x_n + \eps \nonumber
\end{align}
with $0 < \eps \ll 1$. Here, $x_n$ acts as the forcing term. Hidden on the invariant manifold $\{x_n = \eps\}$ is a multiple timescale system in standard form, possessing a further $n-1$ timescales. 
 \end{remk}

 \section{Applying the method to chemical reaction networks}\label{secappl}
A rich source of examples is given by chemical reaction networks of the form
\begin{align} \label{eq:chem}
x' &= \mathbf{N}_0 \cdot f_0(x) + \eps_1 \mathbf{N}_1 \cdot f_1(x) + \eps_2 \mathbf{N}_2 \cdot f_2(x) \cdots + \eps_l \mathbf{N}_l \cdot f_l(x)\, .
\end{align}
Here, {we assume that the `stoichiometric'} matrices $\mathbf{N}_i$ are 
{\it constant} and have full rank, and $1 \gg \eps_1 \gg \cdots \gg  \eps_l$ are dependent or independent small parameters.  

In this section we apply the parametrisation method to two chemical reaction networks of the form \eqref{eq:chem}. 
For more information on systems of this form  we refer to the discussion of the mathematical formalism of two-timescale chemical reaction networks with quasi-steady states in \cite{heinrich,stiefenhofer}, and the multiple timescale formulation suggested in \cite{lee2009}. As noted in \cite{feliu2020}, several strategies have been developed to calculate explicit parametrisations of the critical manifolds found in such networks. This suggests that the parametrisation method is well-suited to analyse reaction networks of the general form \eqref{eq:chem}.

Before presenting the examples, we first make a   general observation. In Section \ref{sec41} we study a system of the more special form
\begin{align} \label{eq:chem2}
x' &= \mathbf{N}_0 \cdot f_0(x) + \eps \mathbf{N}_1\cdot f_1(x)\, ,
\end{align}
where $\mathbf{N}_0$ and $\mathbf{N}_1$ both have dimensions $n\times 1$ and $f_0$ and $f_1$ are scalar functions. As usual, we will assume that \eqref{eq:chem2}  possesses an embedded normally hyperbolic critical manifold $S_0 = \phi^{(0)}_0(U_1) = \{x\in \mathbb{R}^n\, |\, f_0(x) = 0\}$ (see Remark \ref{remk:Nfsplitting}), and define  
$$S_1 := \left\{\xi \in U_1 \, \left|\, r_1^{(1)}(\xi) = 0 \right. \right\} \subset U_1\, .$$
The following proposition implies that  \eqref{eq:chem2} cannot support infra-slow dynamics.
\begin{prop}\label{noinfra-slow}
Consider a system of the form \eqref{eq:chem2} where ${\bf N}_0$ and ${\bf N}_1$ are constant $n\times 1$ matrices and $f_0: \mathbb{R}^n\to\mathbb{R}$ and $f_1: \mathbb{R}^n\to\mathbb{R}$ are scalar functions. \\ \indent
Then either $S_0=\phi_0^{(0)}(S_1)$ or $\phi_0^{(0)}(S_1)$ consists entirely of critical points of \eqref{eq:chem2}. 
\end{prop}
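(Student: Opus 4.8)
The plan is to exploit the fact that both leading terms are rank-one products with \emph{constant} vectors, and to follow how this factorisation is inherited by the leading-order reduced vector field $r_1^{(1)}$. First I would write down $r_1^{(1)}$ explicitly. Taking $i=1$ in the solution formula \eqref{eq:ri} (legitimate since $S_0$ is normally hyperbolic) and using $G_1 = F_1(\phi_0^{(0)})$ together with $F_1 = \mathbf{N}_1 f_1$ gives
$$r_1^{(1)}(\xi) = \tilde P_0^{(0)}(\xi)\, F_1(\phi_0^{(0)}(\xi)) = f_1(\phi_0^{(0)}(\xi))\cdot \tilde P_0^{(0)}(\xi)\,\mathbf{N}_1\, ,$$
where the scalar $f_1(\phi_0^{(0)}(\xi))$ is pulled out of the linear map. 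Thus $r_1^{(1)}$ factors as a scalar function times the fixed-direction vector $v(\xi):=\tilde P_0^{(0)}(\xi)\,\mathbf{N}_1$, and by definition $S_1$ is precisely the zero set of this product.

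The crux is to determine when $v(\xi)=0$. Since $\tilde P_0^{(0)} = (D\phi_0^{(0)*}D\phi_0^{(0)})^{-1}D\phi_0^{(0)*}P_0^{(0)}$ is the left pseudo-inverse of $D\phi_0^{(0)}$ applied after the projection $P_0^{(0)}$, and this pseudo-inverse is injective on $\mathrm{im}\,D\phi_0^{(0)}$ (it inverts $D\phi_0^{(0)}$ there), we get $v(\xi)=0$ if and only if $P_0^{(0)}(\xi)\mathbf{N}_1 = 0$, i.e. $\mathbf{N}_1\in\ker P_0^{(0)}(\xi) = \mathrm{im}\,N_0(\xi)$. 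Now the hypothesis $n\times 1$ for $\mathbf{N}_0$ forces $n-k=1$, so by Remark \ref{remk:Nfsplitting2} the one-dimensional fast fibre is $\mathrm{im}\,N_0(\xi)=\mathrm{span}(\mathbf{N}_0)$ for every $\xi$. Because $\mathbf{N}_0$ and $\mathbf{N}_1$ are both constant, the membership condition $\mathbf{N}_1\in\mathrm{span}(\mathbf{N}_0)$ does not depend on $\xi$. This is exactly what turns a pointwise condition into the claimed all-or-nothing alternative: either $v\equiv 0$ on all of $U_1$, or $v(\xi)\neq 0$ for every $\xi$.

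It then remains to read off the two cases. If $\mathbf{N}_1\in\mathrm{span}(\mathbf{N}_0)$, then $v\equiv 0$, hence $r_1^{(1)}\equiv 0$, so $S_1=U_1$ and $\phi_0^{(0)}(S_1)=\phi_0^{(0)}(U_1)=S_0$, which is the first alternative. If instead $\mathbf{N}_1\notin\mathrm{span}(\mathbf{N}_0)$, then $v(\xi)\neq 0$ everywhere, so $r_1^{(1)}(\xi)=0$ if and only if $f_1(\phi_0^{(0)}(\xi))=0$; consequently $\phi_0^{(0)}(S_1)=\{x\in S_0 : f_1(x)=0\}$. Any such $x$ satisfies both $f_0(x)=0$ (being in $S_0$) and $f_1(x)=0$, so
$$F(x,\eps)=\mathbf{N}_0 f_0(x)+\eps\,\mathbf{N}_1 f_1(x)=0\quad\text{for all }\eps\, ,$$
i.e. $x$ is a critical point of \eqref{eq:chem2}, giving the second alternative.

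The only genuinely delicate step is the middle one: identifying the vanishing of $\tilde P_0^{(0)}\mathbf{N}_1$ with $\mathbf{N}_1$ lying in the constant one-dimensional fast fibre $\mathrm{span}(\mathbf{N}_0)$, and then using constancy of \emph{both} vectors to upgrade this to a global dichotomy. I expect no computational obstacle, but I would emphasise that the argument hinges on $\mathbf{N}_0,\mathbf{N}_1$ being constant and on $n-k=1$; for non-constant stoichiometric vectors, or higher-dimensional fast directions, the set $\{\xi : v(\xi)=0\}$ could be a nontrivial proper subset of $U_1$ and the clean alternative would fail.
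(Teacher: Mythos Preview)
Your proof is correct and follows essentially the same route as the paper: you factor $r_1^{(1)}(\xi)=f_1(\phi_0^{(0)}(\xi))\cdot\tilde P_0^{(0)}(\xi)\mathbf{N}_1$, observe that the vanishing of the second factor is equivalent to $\mathbf{N}_1\in\mathrm{span}(\mathbf{N}_0)$, and use constancy of $\mathbf{N}_0,\mathbf{N}_1$ to make this a $\xi$-independent dichotomy. The paper organises the same argument as a partition $S_1=Z_1\cup Z_2$ (with $Z_1=\{f_1\circ\phi_0^{(0)}=0\}$) and argues that $Z_2\neq\emptyset$ forces $S_1=U_1$, but the content is identical.
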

\begin{proof}
Recall that the first equation in \eqref{eq:G-recursive} together with formula \eqref{eq:ri} implies that
\begin{align*}
r^{(1)}_1(\xi) &= \underbrace{ (D\phi^{(0)}_0(\xi)^* D\phi^{(0)}_0(\xi))^{-1} D\phi^{(0)}_0(\xi)^*}_{\rm injective} P_0(\xi) \underbrace{(\mathbf{N}_1 \cdot f_1(\phi^{(0)}_0(\xi)))}_{F_1(\phi^{(0)}_0(\xi))}\, .
\end{align*}
Let us now define a partition $$S_1 = Z_1 \cup Z_2\subset U_1$$ as follows:
\begin{align*}
Z_1   := & \{\xi \in U_1 \, | \, F_1(\phi^{(0)}_0(\xi)) = 0\} = \{\xi \in U_1 \, |\,  \mathbf{N}_1 \cdot f_1(\phi^{(0)}_0(\xi)) = 0\}\, , 
\\
Z_2 := & S_1 \setminus Z_1 \, .
\end{align*}
The set $Z_1$ corresponds to true equilibria of \eqref{eq:chem2}, i.e., $F_0(x) = F_1(x)= 0$ for $x=\phi^{(0)}_0(\xi)$ and $\xi \in Z_1$. 
The set $Z_2$, on the other hand, corresponds to points $x=\phi_0^{(0)}(\xi)$ for which the projection $P_0(\xi) F_1(x)$ vanishes while $F_1(x)$ itself does not. 

If $Z_2 = \emptyset$, then $\phi_0^{(0)}(S_1)$ consists entirely of critical points of \eqref{eq:chem2}. 
If $Z_2\neq \emptyset$, then (because $f_1$ is scalar) it follows that
$$P_0(\xi) \mathbf{N}_1 = 0 \ \mbox{for some} \ \xi\in Z_2\, .$$
Now note that the projection $P_0(\xi)$ has constant kernel $\text{im}(\mathbf{N}_0)$, so this proves that $$\text{im}(\mathbf{N}_1) \subset \text{im}(\mathbf{N}_0)\, .$$ 
This in turn implies that $P_0(\xi){\bf N}_1 = 0$ for all $\xi \in U_1$, and therefore that $r^{(1)}_1(\xi) = 0$ for all $\xi \in U_1$. In other words, $S_1=U_1$ and therefore $S_0=\phi_0^{(0)}(S_1)$.  
\end{proof}

\noindent Proposition \ref{noinfra-slow} implies that it is not possible for \eqref{eq:chem2} to possess an infra-slow manifold $S_1$, with ${\rm dim}\, S_1 < {\rm dim}\, S_0$, that supports a nontrivial flow. Instead, such an $S_1$ must  entirely consist of equilibria of the original system.

\begin{remk}
The example network \eqref{babyreaction} in the introduction, which possesses a hidden infra-slow timescale in its dynamics, is {\it not} of the form \eqref{eq:chem2}, because $$F_1(x_1,x_2,x_3) = \begin{pmatrix} a_1 x_3 - a_2x_1 x_2  \\
  a_3 x_3 - a_4 x_2\\
1  \end{pmatrix}$$
cannot be factored as $F_1(x) = \mathbf{N}_1 \cdot f_1(x)$ with $\mathbf{N}_1$ a constant $3\times 1$ matrix.
\end{remk}

\noindent In the remainder of Section \ref{secappl} we   consider two hypothetical chemical reaction networks on $\mathbb{R}^3$. The first system has the form \eqref{eq:chem2} and admits a two-dimensional slow invariant manifold $S_0^{\eps}$ that contains a curve of equilibria (i.e., the set $S_1=Z_1$ is nonempty.) The second example is a two-parameter family of singularly perturbed reaction networks of the  more general form \eqref{eq:chem}. It admits both a slow and an infra-slow manifold. In both examples we also demonstrate how to compute a first order expansion of the fast fibre bundles.

\subsection{Hypothetical reaction network with a  curve of equilibria}\label{sec41}

As mentioned in the introduction, Feliu et. al. derived a formula
for the lowest order term $r_1$ of the vector field $r: U_1 \times[0,\eps_0) \to \mathbb{R}^k$ in the local chart $U_1$ for the critical manifold $S_0$ parametrised by an embedding $\phi_0: U_1\to \mathbb{R}^n$. In fact, the {\it reduced system in parametrised form} in Theorem 1 of \cite{feliu2020} is equivalent to formula \eqref{eq:ri} applied to $i=1$.
The authors  of \cite{feliu2020} also applied this formula to several model systems derived from chemical reaction networks. 

We now revisit one of the models presented in \cite{feliu2020}. 
Applying the parametrisation method to this model, we recover the first order approximation of the reduced vector field found in \cite{feliu2020} in the analysis below. We also compute a second order correction. We furthermore compute the first order correction to the fast fibre bundle.
The model under consideration is
a hypothetical 3-species reaction network consisting of a fast reaction
$$
2X_1 + 2X_2 \overset{k_1}{\underset{k_{-1}}{\rightleftharpoons}} 3X_3\, ,
$$
and a slow reaction
$$
X_1 + X_3 \overset{\eps k_2}{\underset{\eps k_{-2}}{\rightleftharpoons}} 2X_2 \, .
$$
%
The corresponding system of differential equations, for $x_i \geq 0$, reads
\begin{align} \label{feliueqns}
\left\{ \begin{array}{ll}
x_1' &= -2k_1 x_1^2 	x_2^2 + 2k_{-1} x_3^3 + \eps(-k_2 x_1 x_3 + k_{-2}x_2^2)\, ,\\
x_2' &= -2k_1 x_1^2 	x_2^2 + 2k_{-1} x_3^3 + \eps(2k_2 x_1 x_3 -2 k_{-2}x_2^2)\, ,\\
x_3' &= ~~3k_1 x_1^2 x_2^2 - 3k_{-1} x_3^3 + \eps(-k_2 x_1 x_3 + k_{-2}x_2^2)\, .\end{array} \right.
\end{align}
 Here, $k_1,k_{-1},k_2,k_{-2} = \mathcal{O}(1)$ are dimensional parameters and $0 < \eps \ll 1$ is a dimensionless small parameter.
%
\noindent We identify that
\begin{align*}
F_0(x) &= \begin{pmatrix} -2k_1 x_1^2 	x_2^2 + 2k_{-1} x_3^3 \\ -2k_1 x_1^2 	x_2^2 + 2k_{-1} x_3^3 \\ 3k_1 x_1^2 x_2^2 - 3k_{-1} x_3^3  \end{pmatrix} = \begin{pmatrix} -2 \\ -2 \\ 3 \end{pmatrix} (k_1 x_1^2 x_2^2 - (k_1/K)x_3^3) \ \   \mbox{and} \\
F_1(x) &= \begin{pmatrix}  -k_2 x_1 x_3 + k_{-2}x_2^2 \\ 2k_2 x_1 x_3 -2 k_{-2}x_2^2\  \\ -k_2 x_1 x_3 + k_{-2}x_2^2\end{pmatrix} = \begin{pmatrix} -1 \\ 2 \\ -1 \end{pmatrix} (k_2 x_1 x_3 - k_{-2} x_2^2)\ ,
\end{align*}
where we define $K := k_1 / k_{-1}$. Note that  $F_0$ admits a two-dimensional critical manifold 
$$S_0 = \{K x_1^2 x_2^2 = x_3^3\}\, .$$ 
Inside $S_0$ we find a further submanifold $T_0 =  \{k_2 x_1 x_3 = k_{-2} x_2^2\} \cap S_0 \subset \mathbb{R}^3$, on which $F_0$ and $F_1$ both vanish. This $T_0$ constitutes a one-dimensional curve of equilibria of  system \eqref{feliueqns}. Figure \ref{fig:feliu} displays various numerically obtained trajectories of system \eqref{feliueqns}. The critical manifold $S_0$ and the curve of equilibria $T_0$ are clearly noticeable.

\begin{remk}
One may observe that \eqref{feliueqns} is a system of the form \eqref{eq:chem2}. 
Proposition \ref{noinfra-slow} then implies that we do not expect to find a hidden timescale corresponding to nontrivial infra-slow flow. Indeed, the dynamics on $T_0$ is trivial, and not of order $\mathcal{O}(\eps^2)$.
\end{remk}

\noindent A parametrisation of the critical manifold $S_0$ is given by
 \begin{align*}
\phi_0(\xi) &= \begin{pmatrix} \xi_1^3 \\ \xi_2^3\\ K^{1/3} \xi_1^2 \xi_2^2 \end{pmatrix}\, .
\end{align*}
Here $\xi = (\xi_1,\xi_2)$ are local coordinates in the chart $U_1 = \{\xi_1, \xi_2 >0\} \subset \mathbb{R}^2$. Moreover, from the factorisation of $F_0$
we see that we may choose as a basis for the fast fibre bundle along the critical manifold the constant vector
\begin{align*}
N_0(\xi) &= \begin{pmatrix} -2 \\ -2 \\ 3 \end{pmatrix}.
\end{align*}

\begin{figure}[t]
  \centering
        \includegraphics[height=0.55\textwidth,width=0.95\textwidth]{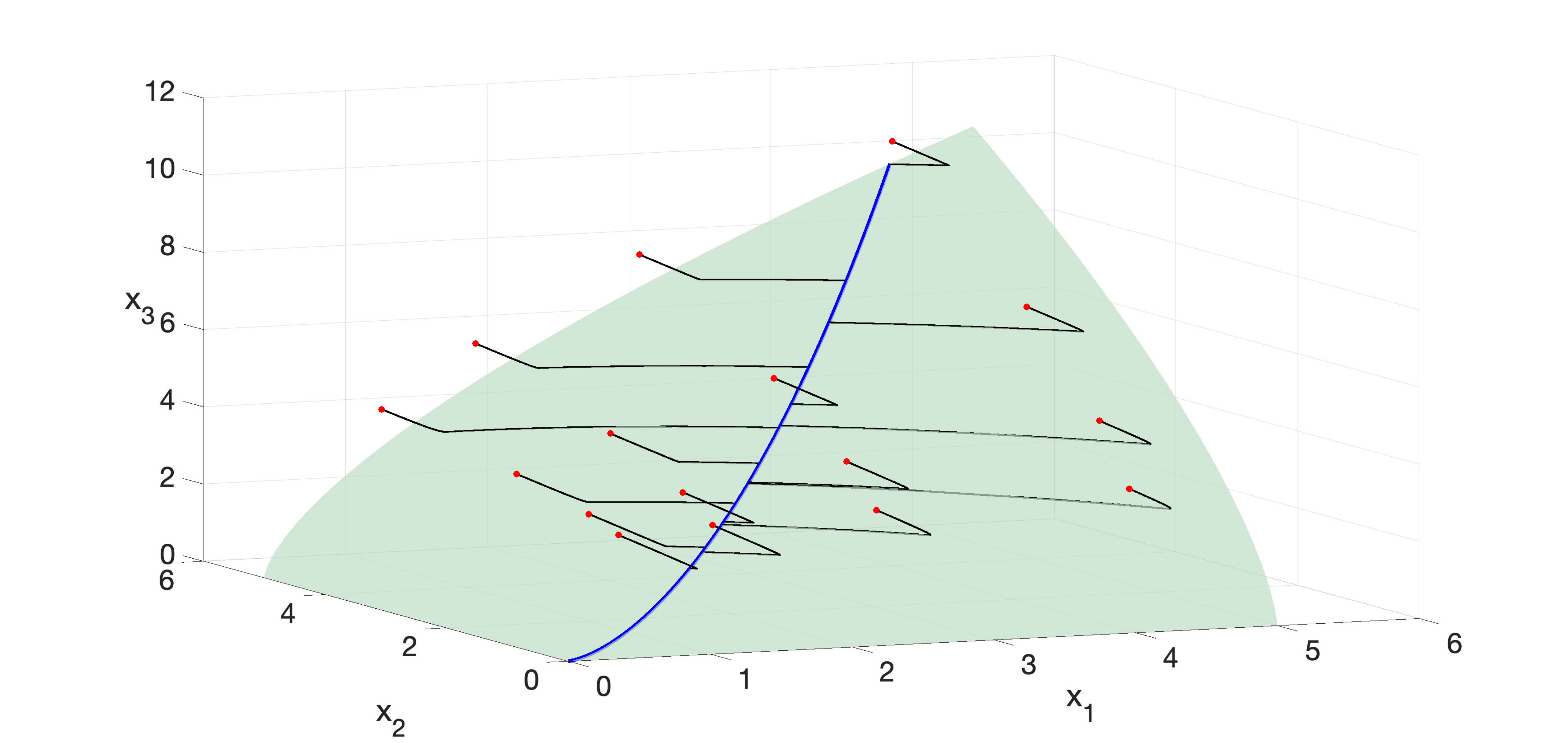}
      \caption{Sample trajectories of \eqref{feliueqns} with initial conditions specified by red points. Green surface: the critical manifold $S_0$. Blue curve: family of equilibria $T_0 \subset S_0$. Parameter set: $\eps = 0.01$, $k_1 = 1.0$, $k_2 = 0.4$, $k_{-1} = 0.5$, $k_{-2} = 0.7$. }  
      \label{fig:feliu}
\end{figure}
\noindent We may now compute $\phi_1$ using \eqref{phiidef} and \eqref{eq:yi}, choosing $X_1 = 0$ for convenience. We find
\begin{align}
\phi_1(\xi) &= N_0(\xi) Y_1(\xi) \nonumber\\
&= -N_0(\xi) n_0(\xi)^{-1} (N_0(\xi)^*N_0(\xi))^{-1}N_0(\xi)^*(1-P_0(\xi))F_1(\phi_0(\xi)) \nonumber 
\\
&= \begin{pmatrix}-2 \\ -2 \\3  \end{pmatrix} \frac{K^{1/3} \left(k_{-2} \xi_2^4 - K^{1/3}k_2\xi_1^5 \right) \left( 3 \xi_1 \xi_2 + K^{1/3} (4\xi_1^3 - 2\xi_2^3) \right)} {k_1 \xi_1^3 \xi_2 \left(9 \xi_1 \xi_2 + 4 K^{1/3} (\xi_1^3 + \xi_2^3)\right)^2}\, . \label{eq:phi1surf}
\end{align}
 %
%
Similarly, the reduced vector field is computed using \eqref{eq:ri}:
\begin{align}
r_1(\xi) &=  (D\phi_0(\xi)^*D\phi_0(\xi))^{-1} D\phi_0(\xi)^* P_0(\xi) (F_1(\phi_0(\xi))) \nonumber \\
&= \frac{K^{1/3} k_2 \xi_1^5 - k_{-2} \xi_2^4}{9 \xi_1 \xi_2 + 4 K^{1/3} (\xi_1^3 + \xi_2^3)} \begin{pmatrix} -(\xi_2^2/\xi_1) (4 K^{1/3} \xi_1^2 + 5 \xi_2) \\ 4 \xi_2 (\xi_1 + K^{1/3} \xi_2^2) \end{pmatrix} \label{eq:r1surf}.
\end{align}
This formula agrees with the result in \cite{feliu2020}. Note that $r_1(\xi) = 0$ along the curve $\{K^{1/3} k_2 \xi_1^5 = k_{-2} \xi_2^4\} = \phi_0^{-1}(T_0) \subset U_1$, corresponding to the curve $T_0 \subset \mathbb{R}^3$ of equilibria of \eqref{feliueqns}.
Higher-order corrections to this reduced vector field must necessarily also remain trivial along this curve of equilibria. Using \eqref{eq:ri} with $i = 2$, \eqref{eq:phi1surf}, and \eqref{eq:r1surf}, we find in particular that
\begin{align*}
r_2(\xi) &= \frac{K^{1/3}(K^{1/3} k_2 \xi_1^5 - k_2 \xi_2^4) \tilde{r}_2(\xi_1,\xi_2)}{k_1\xi_1^3 \xi_2 (9\xi_1 \xi_2 + 4K^{1/3} (\xi_1^3+ \xi_2^3))^3} \begin{pmatrix} -(4K^{1/3} \xi_1^2 + 5 \xi_2)/\xi_1 \\ 4(\xi_1 + K^{1/3}\xi_2^2)/\xi_2 \end{pmatrix},
\end{align*}
where the auxiliary function $\tilde{r}_2(\xi)$ is defined by
$$\tilde{r}_2(\xi) = (-4 k_{-2} \xi_2^3 + 
   k_2 \xi_1^2 (-3 \xi_1 + 2 K^{1/3} \xi_2^2)) (3 \xi_1 \xi_2 + 
   K^{1/3} (4 \xi_1^3 - 2 \xi_2^3)).$$ 
Observe that also $r_2(\xi)$ vanishes along the curve $\{K^{1/3} k_2 \xi_1^5 = k_{-2} \xi_2^4\}=\phi_0^{-1}(T_0) \subset U_1$ (which corresponds to the curve of equilibria of the full system). 

We finish by investigating the linear fast fibre bundle along $S_0^{\eps}$. Here we compute only the first-order correction using \eqref{eq:H-recursive} and \eqref{eq:li}. We find
\begin{align*}
L_1(\xi) &= ((D\phi_0(\xi))^*D\phi_0(\xi))^{-1} (D\phi_0(\xi))^* P_0(\xi) H_1(\xi) n_0(\xi)^{-1} \\
&= \frac{K^{1/3}(-4 k_{-2} \xi_2^3 + k_2 \xi_1^2 (-3 \xi_1 + 2 K^{1/3} \xi_2^2)) }{k_1 \xi_1^3 \xi_2^3 (9 \xi_1 \xi_2 + 4 K^{1/3} (\xi_1^3 + \xi_2^3))^2}  \begin{pmatrix} (1/\xi_1) (4K^{1/3}\xi_1^2 + 5 \xi_2) \\   -(4/\xi_2) (\xi_1 + K^{1/3} \xi_2^2)\end{pmatrix},
\end{align*}
from which it follows that the first-order correction of the parametrisation $N_0(\xi)$  of the fast fibres is given by $\eps N_1(\xi) = \eps D\phi_0(\xi) L_1(\xi)$ (selecting $M_1 = 0$ for convenience). 

\begin{remk}
We performed the calculations of $r_1,\phi_1,r_2,\phi_2$, and $L_1$ in Mathematica. The corresponding Mathematica notebook has been uploaded to  \url{https://github.com/ianlizarraga/ParametrisationMethod}. We also included alternative calculations using the CSP method \cite{lam1994, lam1989,lizarraga2020}.
\end{remk}

\subsection{Three-timescale reaction network.} \label{sec:val}
In this section we study a modified version of a three-species reaction-kinetics model due to Valorani et. al. \cite{valorani2005}. The model describes the three reactions
$$
\begin{aligned}
X_1 & \overset{k_1^f}{\underset{k_{1}^b}{\rightleftharpoons}} 2X_2\\
X_1 + X_2  & \overset{k_2^f}{\underset{k_{2}^b}{\rightleftharpoons}} X_3\\
X_2 + X_3 & \overset{k_3^f}{\underset{k_{3}^b}{\rightleftharpoons}} X_1
\end{aligned}
$$
%
%
%
%
by means of the two-parameter family of differential equations
\begin{align}
\begin{pmatrix} x_1' \\ x_2' \\ x_3' \end{pmatrix} &=
\begin{pmatrix} -1 \\2 \\ 0 \end{pmatrix} (x_1-x_2^2) +
\eps \begin{pmatrix} -1 \\-1 \\ 1 \end{pmatrix} (x_1 x_2-x_3) + \delta \begin{pmatrix} 1 \\ -1 \\ -1 \end{pmatrix} (x_2 x_3-x_1) \label{eq:val}\\
&= F_0(x) + \eps F_1(x) + \delta F_2(x)\,. \nonumber
\end{align}
Here, $x_1, x_2, x_3 \geq 0$, while $\eps\ll 1$ and $\delta\ll 1$ are two small parameters.
\begin{remk}
Valorani et. al. focused their analysis on a rescaled variant of \eqref{eq:val}, and used the CSP method to numerically approximate a one-dimensional attracting slow manifold for a parameter set equivalent to $(\eps,\delta) = (0.2, 0.002)$. They also gave numerical evidence of an intermediate relaxation onto a two-dimensional surface. In \cite{lizarraga2020} a two-timescale subfamily of \eqref{eq:val} (equivalent to choosing $\eps = 0.2$ fixed and letting $\delta$ vary) was analysed from the point of view of GSPT, and formulas for the first-order corrections of the one-dimensional slow manifold and its fast fibres were derived using the CSP method.
\end{remk}
\noindent In the following analysis, we choose $\delta = \bar \delta \eps^2 = O(\eps^2)$ and let $\eps$ vary. This renders the system in the form \eqref{eq:chem} with just one independent small parameter. We show how the parametrisation method algorithmically uncovers genuine three-timescale dynamics (i.e., a nested structure of slow and infra-slow invariant manifolds). Such three-timescale dynamics is also suggested by the numerical integration shown in Figure \ref{fig:valorani1}.  Because our reaction network is entirely hypothetical, we are free to consider other possible relations between $\varepsilon$ and $\delta$ later. 
 



\begin{figure}[t!]
  \centering
        \includegraphics[height=0.4\textwidth,width=0.95\textwidth]{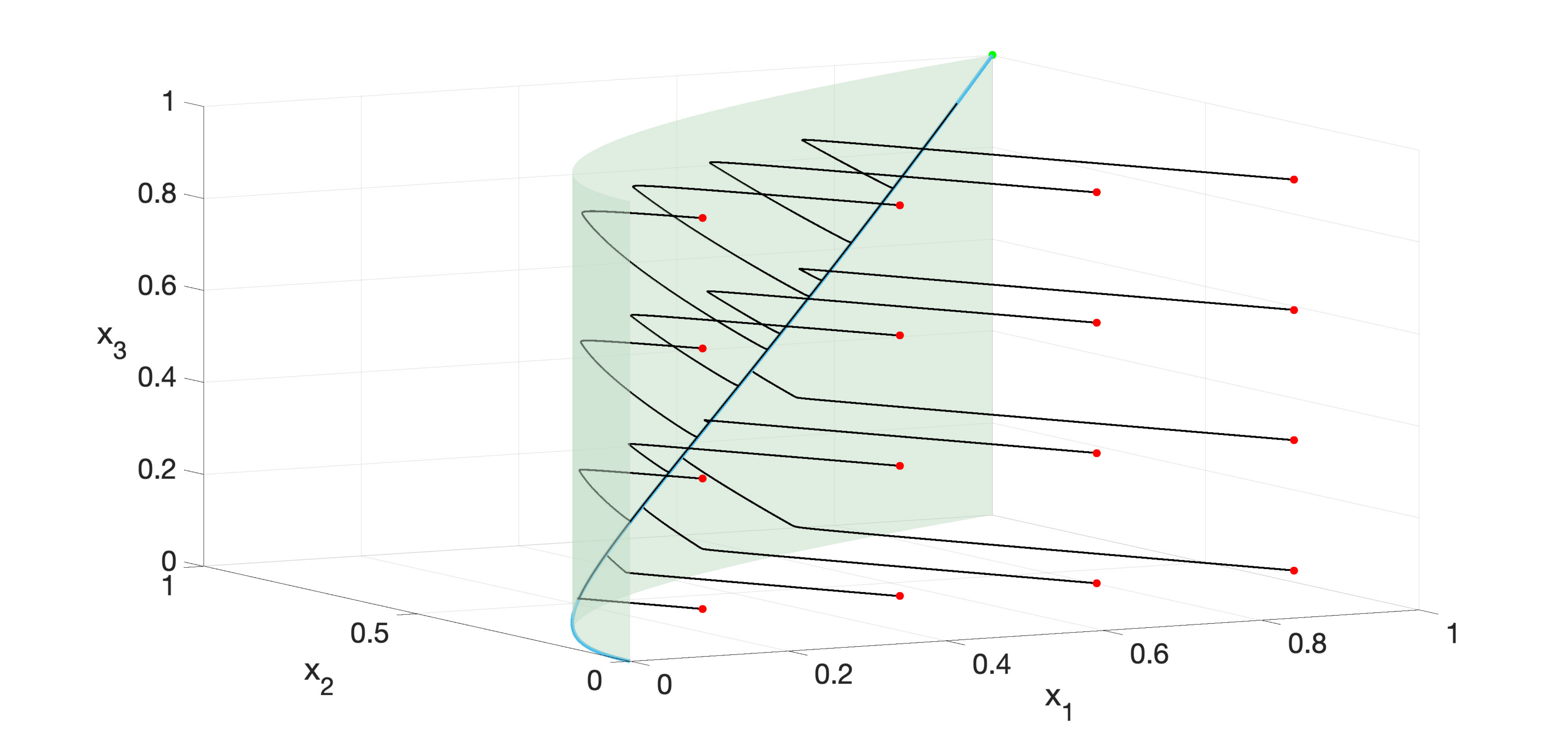}
      \caption{Sample trajectories of \eqref{eq:val} with initial conditions specified by red points. Green surface: the critical manifold $S_0$. Blue curve: the  manifold $T_0 \subset S_0$. Green point: stable node at $(1,1,1)$. Parameter set: $\eps = 10^{-2}$, $\delta = 10^{-4}$. }
      \label{fig:valorani1}
\end{figure}

We start our analysis by noting that our system admits a two-dimensional critical manifold  $$S_0 = \{(x_1,x_2,x_3): x_1 = x_2^2,\,x_2 > 0,\,x_3 > 0\}$$ 
of equilibrium points of $F_0$. This $S_0$ admits a smooth parametrisation
\begin{align*}
\phi^{(0)}_0(\xi) &= \begin{pmatrix} \xi_1^2 \\ \xi_1 \\ \xi_2 \end{pmatrix},
\end{align*}
where $\xi = (\xi_1,\xi_2)$ are local coordinates on a chart $U_1 = \{(\xi_1,\xi_2):\xi_1,\xi_2 > 0\} \subset \mathbb{R}^2$. Note that in this example we use superscripts, in anticipation of finding a nontrivial infra-slow manifold. Recalling Remark \ref{remk:Nfsplitting2}, we also note that the fast fibre bundle along $S_0$ is spanned by
\begin{align*}
N^{(0)}_0(\xi) &= \begin{pmatrix} -1 \\ 2 \\ 0 \end{pmatrix}\, .
\end{align*}
 The first-order corrections $\phi^{(0)}_1$ and $r^{(1)}_1$ are computed in an identical manner to the previous example; selecting $X_1 = 0$ for convenience, and using \eqref{eq:ri}--\eqref{eq:yi}, we find
\begin{align}
\phi^{(0)}_1(\xi) &= N^{(0)}_0(\xi) Y_1(\xi) \nonumber\\
&= -N_0(\xi) n_0(\xi)^{-1} (N_0(\xi)^*N_0(\xi))^{-1}N_0(\xi)^*(1-P_0(\xi))F_1(\phi_0(\xi)) \nonumber\\
&= \frac{1}{(1+4\xi_1)^2} \begin{pmatrix} (\xi_1^3-\xi_2)(2\xi_1-1) \\  -2(\xi_1^3-\xi_2)(2\xi_1-1) \label{eq:valphi1} \\ 0\end{pmatrix}\, ,
\end{align}
%
%
and
\begin{align}
r^{(1)}_1(\xi) &=  (D\phi^{(0)}_0(\xi)^*D\phi^{(0)}_0(\xi))^{-1} D\phi_0^{(0)}(\xi)^* P_0(\xi) (F_1(\phi^{(0)}_0(\xi))) \nonumber\\
&= \begin{pmatrix} \frac{-3}{1+4\xi_1}(\xi_1^3-\xi_2) \\ \xi_1^3 - \xi_2 \end{pmatrix} = \begin{pmatrix} -\frac{3}{1+4\xi_1} \\ 1 \end{pmatrix} (\xi_1^3-\xi_2)\, . \label{eq:valred1}
\end{align}
Note that $r^{(1)}_1(\xi)$ admits a factorisation as  in Remark \ref{remk:Nfsplitting}. 

Similarly, we use \eqref{eq:li} to  compute the first-order correction of the fast fibres. We find
 \begin{align*}
 L_1(\xi) &= \begin{pmatrix}-\frac{3\xi_1(1-2\xi_1)}{(1+4\xi_1)^2} \\ \frac{\xi_1(1-2\xi_1)}{1+4\xi_1}\end{pmatrix}.
 \end{align*}
%
%
Now note that $r^{(1)}_1: U_1 \to \mathbb{R}^2$ admits a curve of  equilibria defined by
$$S_1 = \{\xi_2 = \xi_1^3\} = \left(\phi_0^{(0)}\right)^{-1}(\{F_0 = F_1 = 0\}) \subset U_1\, .$$ 
As an embedding for $S_1$ let us choose $\phi^{(1)}_0: U_2 = \{\eta > 0\} \to \mathbb{R}$ defined by
\begin{align*}
\phi^{(1)}_0(\eta) &=  \begin{pmatrix} \eta \\ \eta^3 \end{pmatrix}\, .
\end{align*}
\begin{figure}[t!]
  \centering
        \includegraphics[height=0.4\textwidth,width=0.95\textwidth]{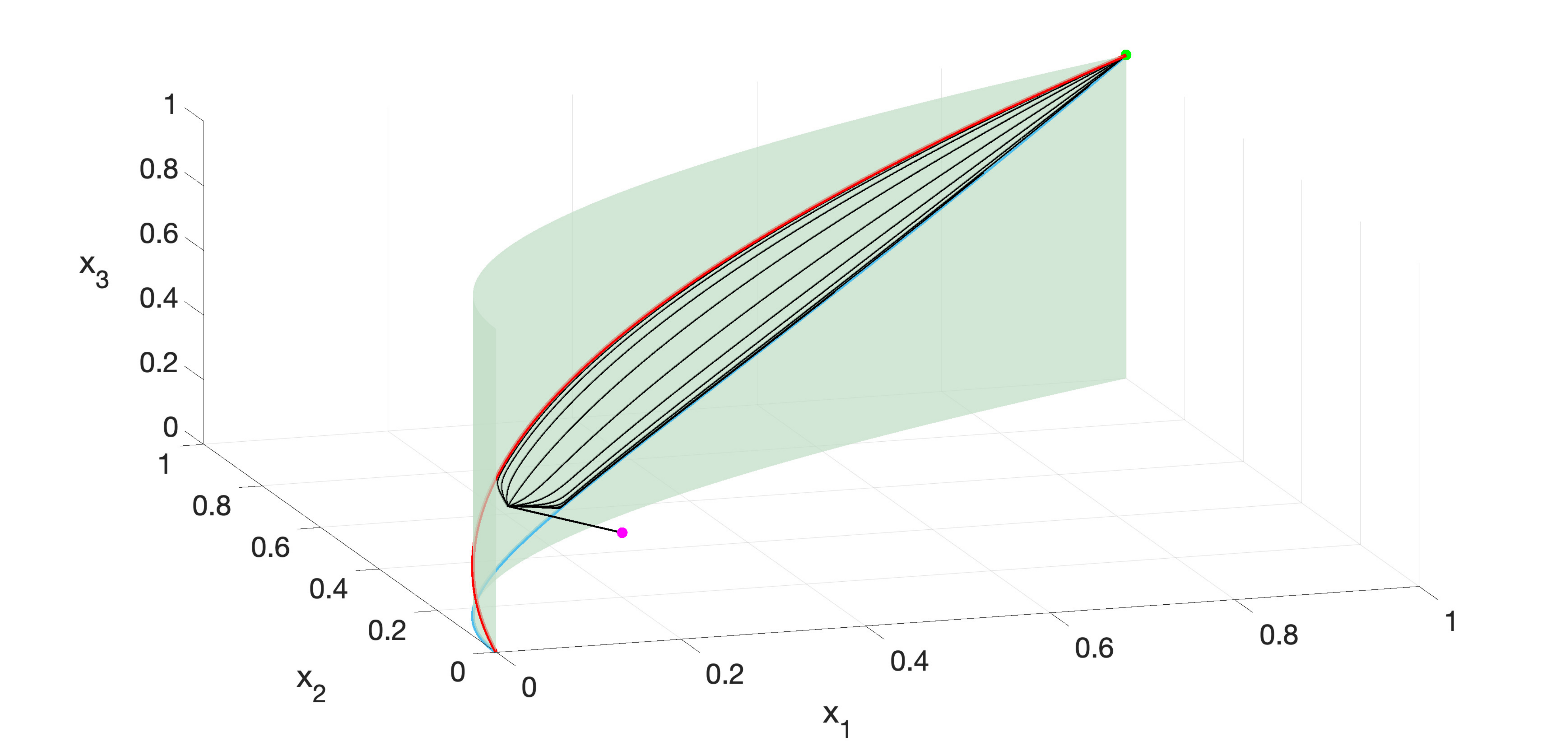}
      \caption{Parameter sweep of  \eqref{eq:val} along the line $(\eps,\delta) = (\eps, (10^{-2}-\eps)+10^{-4})$ for $\eps\in [10^{-4},10^{-2}]$, with initial condition $(0.2,0.2,0.2)$ (magenta point). Green surface is a portion of $S_0 = \{(x_2^2,x_2,x_3): x_2 > 0,\, x_3 > 0\}$, blue curve is $\{(x_2^2,x_2,x_2^3):x_2 >  0\}$, red curve is $\{(x_2^2,x_2,x_2):x_2> 0\}$. Green point: stable node at $(1,1,1)$.}  
      \label{fig:valorani}
\end{figure}
\noindent Using \eqref{eq:valred1} and Remark \ref{remk:Nfsplitting2}, we find that the fast fibre bundle along $S_1$ is spanned by $$N^{(1)}_0(\xi) = \begin{pmatrix} -\frac{3}{1+4\xi_1} \\ 1 \end{pmatrix}.$$ 
We proceed to compute the $O(\eps^2)$ flow on $S_1^{\eps}$ in the chart $U_2$, using  \eqref{formular22}. Proposition \ref{noinfra-slow} implies that a nontrivial term $F_2$ is required to induce a nontrivial flow along a lower-dimensional invariant manifold. In the present example, one can indeed verify directly from \eqref{eq:valphi1} and \eqref{eq:valred1}  that $r^{(1)}_1|_{S_1} = 0$ and $\phi^{(0)}_1|_{S_1} = 0$; it follows by \eqref{formular22} that $F_2\neq 0$ is a necessary condition for $r^{(2)}_2\neq 0$.  From our formulas for $D\phi^{(0)}_0,\,D\phi^{(1)}_0,\,N^{(0)}_0,$ and $N^{(1)}_0$, we obtain that the auxiliary projections defined in \eqref{eq:tildeP} are given by
 \begin{align*}
\tilde{P}_0^{(0)}(\xi) &= \begin{pmatrix}\frac{2}{1+4\xi_1}  & \frac{1}{1+4\xi_1} & 0 \\ 0 & 0 & 1 \end{pmatrix}\, , \\
\tilde{P}_0^{(1)}(\eta) &= \frac{1}{1+4\eta+9\eta^2}\begin{pmatrix} 1+4\eta & 3  \end{pmatrix}.
\end{align*}
By \eqref{formular22}, we therefore have, denoting $\xi=\phi_0^{(1)}(\eta)$,
\begin{align}
r^{(2)}_2(\eta) &= \tilde{P}_0^{(1)}(\eta) \tilde{P}_0^{(0)}(\xi) G_2(\xi) \nonumber\\
&= \tilde{P}_0^{(1)}(\eta) \tilde{P}_0^{(0)}(\xi) F_2(\xi) \nonumber\\
&= \frac{2\bar{\delta}(\eta^2-\eta^4)}{1+4\eta+9\eta^2}. \label{eq:val2}
\end{align}
For $\bar{\delta} = 1$, this expression is identical to the reduced system, written in terms of a parametrisation variable $y$, in \cite{lizarraga2020}. In the latter paper, a two-timescale splitting with a one-dimensional critical manifold $S_0 = \{(x_2^2,x_2,x_2^3): x_2 > 0\}$ was assumed from the start.\\ 
      
\noindent We emphasize that the small parameters $\eps$ and $\delta = \bar{\delta} \eps^2$ are {\it mutually dependent} in the above computations; these computations should therefore be contrasted with the theoretical results in \cite{cardin2017} and \cite{kruff2019}, which are concerned with the persistence of nested invariant manifolds in families of systems with {\it independent} small parameters, such as systems of the form \eqref{eq:cardin1}. Our computations illustrate the utility of the parametrisation method to uncover multiple timescale dynamics when the small parameters are interdependent.

\begin{remk}
After considering the parameter regime $\delta = O(\eps^2)$, let us now briefly discuss how the computations differ in other parameter regimes. For $\eps = O(\delta^2)$, an identical analysis to the above reveals a different three-timescale system, in which the one-dimensional critical manifold of the reduced problem is 
$S_1 = \{(x_2^2, x_2,x_2)\, :\, x_2 >0\}$. Varying $\varepsilon$ and $\delta$ simultaneously along a curve in parameter space connecting the regions $\eps \ll \delta \ll 1$ and $\delta \ll \eps \ll 1$, we can identify these two distinct three-timescale systems as limiting cases of a one-parameter family of two-timescale systems; see Fig. \ref{fig:valorani}. 
Numerical integration suggests that in the intermediate regime $\eps \approx \delta$, trajectories near $S_0$ still relax onto a one-dimensional curve. This relaxation corresponds to the approach of the trajectories to the stable node at $(1,1,1)$ along a weak eigendirection. The Jacobian evaluated at $(1,1,1)$ is 
\begin{align}
DF(1,1,1) &= \begin{pmatrix} -\eps - \delta -1 & -\eps + \delta + 2 & \eps + \delta \\
-\eps + \delta + 2 & -\eps - \delta - 4 & \eps -\delta \\
\eps + \delta & \eps - \delta & -\eps - \delta
\end{pmatrix}\, .
\end{align}
Sylvester's criterion verifies that the equilibrium point is indeed a stable node for all $\varepsilon,\delta > 0$. The corresponding relaxation does not imply the existence of a third (hidden) timescale though, since $r^{(1)}_1$ now does not admit a curve of equilibria.
 \end{remk}
\section{Concluding remarks} \label{secremarks}
\begin{figure}[t]
  \centering
        \includegraphics[height=0.4\textwidth,width=0.8\textwidth]{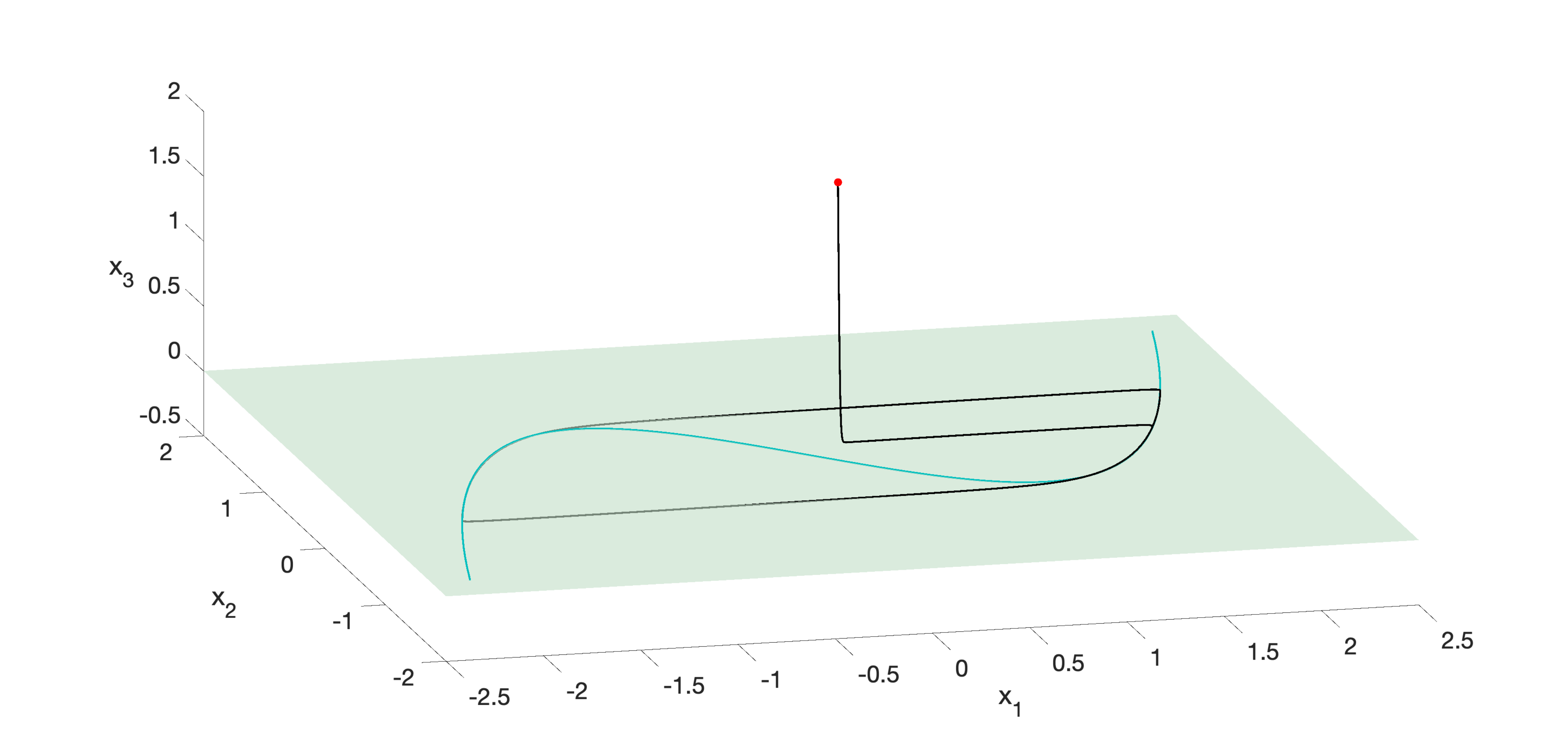}
      \caption{Sample trajectory of \eqref{embeddedvdp} with $\eps = 0.01$ and initial condition $(0.2,0.2,2)$ (red point). Critical manifold $S_0 = \{x_3 = 0\}$ depicted by green surface; infra-slow critical manifold $S_1 = \{(x_1,x_1^3/3-x_1,0): x_1 \in \mathbb{R}\}$ depicted by teal curve.}  
      \label{fig:embeddedvdp}
\end{figure}

We have described a novel method to approximate slow manifolds and fast fibre bundles in multiple timescale systems of the general form \eqref{eq:singpert}  in the spirit of the parametrisation method of Cabr\'e, Fontich and de la Llave et. al. \cite{Cabre1, Cabre2, Cabre3, parameterisation_book}. The power of our method lies in its {\it top-down} approach: after computing a smoothly embedded slow manifold $S_0^{\eps}$, we can algorithmically uncover and approximate its nested infra-slow manifolds and fast fibres. 
We emphasize that no {\it a priori} knowledge of the ultimate attractor state(s) or the number of disparate timescales is needed to apply the method.

We highlight two important applications of the parametrisation method in this paper: we study the phenomenon of {\it hidden} timescales, that can arise in general singularly perturbed systems, and we show how the method can uncover multiple timescale dynamics in systems with {\it dependent} small parameters. Both applications lie beyond the current multiple timescale framework introduced by Cardin \& Teixeira in \cite{cardin2017}, and we conclude that systems of the form  \eqref{eq:cardin1} may be too restrictive for the study of general multiple timescale dynamical systems. Building on the important first results in \cite{cardin2017}, we suggest a (coordinate-independent) geometric definition of a multiple timescale  system in our Definition \ref{def:mult}. The theory for the corresponding nested invariant manifolds in this general context merits further investigation.

One of our goals was to demonstrate the practical implementation of our parametrisation method. Our examples were limited to second-order corrections of the slow manifolds and first-order corrections of the fast fibres, computed symbolically with Mathematica. We anticipate that naive symbolic calculations to obtain higher-order corrections will become unwieldy, since the cost of computing the derivatives in the definitions of $G_i$ and $H_i$---see \eqref{eq:G-recursive} and \eqref{eq:H-recursive}---grows quickly. For more complex problems, we suggest pairing the parametrisation method with automatic differentiation algorithms \cite{autodiff} to handle this inefficiency. Automatic differentiation has been previously adapted to the parametrisation method to efficiently approximate invariant manifolds of fixed points \cite{parameterisation_book} and isochrons of limit cycles \cite{huguet}.

In this paper we focused on the case of nested {\it normally hyperbolic} invariant manifolds. When the nested manifolds are attracting, the long-term behavior of typical trajectories is governed by the dynamics on the slowest timescale. More complicated behavior can occur in general, including switching between several regimes of dynamical timescales as observed in, for example, relaxation oscillations. These more complicated transitions can arise due to possible loss of normal hyperbolicity at some `level' of the nesting, which may not at all be apparent from the general form \eqref{eq:singpert}. The following {\em embedded} Van der Pol system illustrates some of the complications that can occur:
\begin{align} 
x_1' &= \eps (x_2+x_1-x_1^3/3) \nonumber \\
x_2' &= -\eps x_3 \label{embeddedvdp}\\
x_3' &= -x_3 + \eps x_1. \nonumber
\end{align}
This system admits a critical manifold $S_0 = \{x_3 = 0\}$, and we emphasize that the critical manifold at this `top' level is normally hyperbolic attracting. For $0 < \eps \ll 1$, typical trajectories in fact approach a hidden relaxation oscillation within $S_0^{\eps}=\{x_3=\eps x_1 +O(\eps^2)\}$, connecting the slow and the (hidden) infra-slow timescales (see Fig. \ref{fig:embeddedvdp}).

 We suggest that similar scenarios can arise in chemical reaction networks: a subset of fast precursors is rapidly extinguished, after which the system settles onto a low-dimensional attractor which may itself have distinct temporal features.  The parametrisation method can be used to identify both the slow and the infra-slow manifolds and fast fibres, away from the fold points where normal hyperbolicity is lost. In general, the parametrisation method may serve as a useful tool, as a more complete theory for the loss of normal hyperbolicity in multiple timescale systems is developed.
\\

\noindent
{\bf Acknowledgement:}
The authors would like to thank the referees for their valuable feedback. BR is happy to acknowledge the Sydney Mathematical Research Institute for its hospitality and financial support, and the Dutch mathematics cluster NDNS+ for providing travel support. IL and MW would like to acknowledge support through the ARC DP180103022 grant.

\end{document}